\newtheorem{theorem}{Theorem}
\newtheorem{corollary}[theorem]{Corollary}
\theoremstyle{definition}
\newtheorem{definition}[theorem]{Definition}
\theoremstyle{remark}
\newtheorem{remark}[theorem]{Remark}
\newtheorem{example}[theorem]{Example}
\theoremstyle{plain}
\newtheorem*{theorem*}{Theorem}
\newtheorem*{lemma*}{Lemma}
\newtheorem*{proposition*}{Proposition}
\theoremstyle{definition}
\newtheorem*{definition*}{Definition}
\theoremstyle{remark}
\def\dotminus{\mathbin{\ooalign{\hss\raise1ex\hbox{.}\hss\cr
  \mathsurround=0pt$-$}}}
\renewcommand{\:}{\colon}
\newcommand{\suchthat}{\,|\,}
\newcommand{\dash}{\operatorname{-}}
\newcommand{\union}{\cup}
\newcommand{\proves}{\vdash}
\newcommand{\PRA}{\mathsf{PRA}}
\newcommand{\PRS}{\mathsf{PRS}}
\newcommand{\ZF}{\mathsf{ZF}}
\newcommand{\ISigma}{\mathsf{I\Sigma}}
\renewcommand{\And}{\wedge}
\newcommand{\Or}{\vee}
\newcommand{\Yields}{\Rightarrow}
\newcommand{\YIELDS}{\mathop{\;\Longrightarrow\;}}
\newcommand{\Implies}{\mathop{\, \rightarrow\,}}
\newcommand{\Truth}{\top}
\newcommand{\Falsehood}{\bot}
\newcommand{\Proves}{\vdash}
\newcommand{\OR}{\bigvee}
\newcommand{\AND}{\bigwedge}
\newcommand{\Exists}{\exists}
\newcommand{\Forall}{\forall}
\renewcommand{\S}{\mathcal S}
\newcommand{\NN}{\mathbb N}
\newcommand{\RR}{\mathbb R}
\newcommand{\iI}{\mathfrak I}
\newcommand{\kK}{\mathfrak K}
\newcommand{\lL}{\mathfrak L}
\newcommand{\mM}{\mathfrak M}
\begin{document}

\title{A complete system of deduction for $\Sigma$ formulas}
\author{\footnotesize Andre Kornell}
\address{ \footnotesize Department of Mathematics\\ University of California, Davis}
\email{kornell@math.ucdavis.edu}

\begin{abstract}
The $\Sigma$ formulas of the language of arithmetic express semidecidable relations on the natural numbers. More generally, whenever a totality of objects is regarded as incomplete, the $\Sigma$ formulas express relations that are witnessed in a completed portion of that totality when they hold. In this sense, the $\Sigma$ formulas are more concrete semantically than other first-order formulas. 

We describe a system of deduction that uses only $\Sigma$ formulas. Each axiom, an implication between two $\Sigma$ formulas, is implemented as a rewriting rule for subformulas. We exhibit a complete class of logical axioms for this system, and we observe that a distributive law distinguishes classical reasoning from intuitionistic reasoning in this setting. 

Skolem's theory $\mathsf{PRA}$ of primitive recursive arithmetic can be formulated in our deductive system. In Skolem's system, free variables are universally quantified implicitly, but in our formulation, free variables act as parameters to the deduction. In this sense, our formulation is more explicitly finitistic. Furthermore, most of our results are themselves finistic, being theorems of $\mathsf{PRA}$. In particular, appending our main theorem to a celebrated chain of reductions from reverse mathematics, we find that an implication of $\Sigma$ formulas is derivable in $\mathsf{WKL}_0$ if and only if there is a deduction from the antecedent to the consequent in our formulation of $\mathsf{PRA}$.
\end{abstract}

\maketitle

Some mathematicians taking a potentialist view of the totality $\NN$ of natural numbers or the totality $\RR$ of real numbers or the totality $\mathbf V$ of pure sets reject the validity of classical reasoning for that structure. Such mathematicians are wary of quantification over an incomplete totality, in various terms. For example, Weaver analyzes quantification via the notion of surveyability, the abstract possibility of sorting through the totality \cite{Weaver09} \cite{Weaver15}, while Feferman expresses the view that only quantification over a complete totality preserves the definiteness of a property \cite{Feferman09} \cite{Feferman09A}, which he variously analyzes as semantic bivalence, or the expressibility of that property by an operation to the set of truth values $\{\Truth, \Falsehood\}$. Roughly speaking, both Weaver and Feferman advocate for the use of intuitionistic logic for formulas that quantify over incomplete totalities. This attitude goes back to at least Pozsgay \cite{Pozsgay72}, who investigated a variant of $\ZF$ in which the law of the excluded middle is postulated only for formulas whose quantifiers range over sets. We follow these authors in reasoning about an incomplete structure as a whole, in contrast to an approach that reasons about an incomplete structure in terms of its completed substructures \cite{HamkinsLinnebo17} \cite{Hamkins18}.

The attitude of the present paper is that universal quantification over an incomplete totality is impossible, meaningless. Completeness is not a formal notion, and there are conceptual models for which the word ``complete'' might sensibly be interpreted to opposite effect. For example, the incompleteness of a totality might be treated modally, with universal quantification expressible using the necessity operator. However, it is also sensible to call a totality complete if its elements are all epistemologically accessible in some ideal sense. This is more or less our interpretation of the word ``complete''. 

Intuitionistic reasoning is not justified per se, if any of its formulas are meaningless. We formalize a mode of deduction for the formulas that we take to be meaningful for our understanding of incomplete totalities. Our conceptual model is based on ordinary computability: if we take the totality $\NN$ of natural numbers to be incomplete, then our meaningful formulas are essentially the computably semidecidable predicates. In the general case, we reason about a structure whose universe is not assumed to be a complete totality. To accommodate universal quantification over its complete fragments, the structure is equipped with a binary relation we notate $\in$; we require that the totality of points related to any given point of the structure be complete. This condition is similar in kind to the requirement that equality relation $=$ be an equivalence relation respected by each formula.

We take all atomic formulas to be meaningful, and we take take the class of meaningful formulas to be closed under conjunction, disjunction, bounded universal quantification, and unbounded existential quantification. We allow unbounded existential quantification because an existential claim does not require epsitemological access to all the elements of universe, but just to one. The logical complement of an existential claim is a universal claim, which we take to be meaningless; hence, negation is not one of our allowed connectives. However, we do take the relations $=$ and $\in$ to be abstractly decidable, so the vocabulary of the structures we consider includes binary relation symbols $\neq$ and $\not \in$, which denote the complements of relations $=$ and $\in$. Thus, we take the $\Sigma$ formulas in the language of set theory to be meaningful in themselves; for finitary set theory, this attitude goes back to Takahashi \cite{Takahashi}.

For precision, we specify a first-order formula to be a formula built up using conjunction, disjunction, universal quantification, existential quantification, and implication, starting from the atomic formulas, including the Boolean sentence letters $\Truth$ and $\Falsehood$. We render the quantifiers as $\forall v\:$ and $\exists v\:$, using the semicolon as a visual separator, with the convention that quantifiers bind less tightly than each connective except implication. We identify the meaningful first-order formulas with the $\Sigma$ formulas defined here:

\begin{definition}
Let $\S$ be a vocabulary consisting of finitary function and relation symbols, including a distinguished binary relation symbol $\not \in$. We define $\Sigma(\S)$ to be the class of first-order formulas in the vocabulary $\S$ whose only connectives are conjunction, disjunction, existential quantification, and universal quantification, with each occurrence $\forall v$ of a universal quantifier quantifying a formula of the form $v  \not \in t \Or \phi$ with $v$ not free in $t$. We abbreviate this formula as $\forall v \in t\: \phi$.
\end{definition}

The usual definition of $\Sigma$ formulas allows negation where it does not occur above an unbounded existential quantifier. Our definition effectively confines negation to literals to ensure that $\Sigma$ formulas are positive in their atomic subformulas. Furthermore, we control which atomic formulas may occur negated by explicitly adding negated relation symbols, or neglecting to do so. The $\Sigma$ formulas of the language of set theory are essentially $\Sigma(\S)$ formulas for $\S$ the vocabulary consisting of binary relation symbols $=$, $\neq$, $\in$, and $\not \in$. The $\Sigma$ formulas of the language of arithmetic are essentially $\Sigma(\S)$ formulas for $\S$ the vocabulary consisting of binary relation symbols $=$, $\neq$, $<$, $\not <$, $0$, $1$, $+$, and $\cdot$, where we take $<$ and $\not <$ as alternative renditions of $\in$ and $\not \in$. In both of these settings, $\Sigma$ formulas are equivalent to $\Sigma_1$ formulas, which are characterized by the additional condition that the only occurrence of unbounded existential quantification is at the front of the formula. Generally, this equivalence requires a collection principle such as
$$\forall t \in u\: \exists v\: \phi \YIELDS \exists w\: \forall t \in u\: \exists v \in w\: \phi.$$

\begin{definition}
Let $\S$ be a vocabulary consisting of finitary function and relation symbols, including a distinguished binary relation symbol $\not \in$. We define a $\Sigma(\S)$ theory to be a class of implications $\phi \Yields \psi$, with $\phi$ and $\psi$ both $\Sigma(\S)$ formulas.
\end{definition}

If $\phi$ is a first order formula whose free variables are $v_1, \ldots, v_n$, we define its universal closure $\overline \phi$ to be the formula $\forall v_1\: \cdots \forall v_n\: \phi$. We assume that our variables are canonically linearly ordered, resolving the ambiguity in the order of the universal quantifiers. If $T$ is a $\Sigma(\S)$ theory, we define the class $\overline T$ to consist of universal closures of the implications in $T$. A model of $T$ is just a model of $\overline T$ in the ordinary sense.

Three arguments form the core of this paper. For each $\Sigma(\S)$ theory $T$, and all $\Sigma(\S)$ formulas $\phi$ and $\psi$, we use the cut-elimination theorem to reduce derivations of $\overline T\Proves \overline {\phi \Yields \psi}$ in Gentzen's $\lL\kK$, to derivations of $\phi \Proves \psi$ in $\lL\kK_\Sigma(T)$, a sequent calculus for $\Sigma$ formulas that allows cuts according to the axioms of $T$. Second, we use deep inference to reduce derivations of $\phi \Proves \psi$ in $\lL\kK_\Sigma(T)$ to deductions in the rewriting system $\mathrm{RK}_\Sigma(T)$. Third, we establish the soundness of the rewriting system $\mathrm{RK}_\Sigma(T)$ as a consequence of the positivity of our connectives.

The derivations we consider are all finite objects, so we will assume that the vocabulary $\S$ is finite. With eye towards applications in the foundations of mathematics, we claim many numbered results from the theory $\Sigma^0_1\dash\mathsf{PA}$ \cite{Simpson}*{remark I.7.6}, also called $\mathsf{I\Sigma}$.

\begin{theorem}[$\mathsf{I\Sigma}$]\label{theorem A}
Let $\S$ be a finite set of finitary function and relation symbols, including a distinguished binary relation symbol $\not \in$. Let $T$ be a finite set of formulas that are implications between $\Sigma(\S)$ formulas. Let $\phi$ and $\psi$ also be $\Sigma(\S)$ formulas. The following are equivalent:
\begin{enumerate}
\item There is a derivation of the sequent $\overline T \Proves \overline{\phi\Yields \psi}$ in the sequent calculus $\lL\kK$.
\item There is a derivation of the sequent $\phi \Proves \psi$ in the sequent calculus $\lL \kK_\Sigma(T)$.
\item There is a deduction from $\phi$ to $\psi$ in the rewriting system $\mathrm{RK}_\Sigma(T)$.
\end{enumerate}
\end{theorem}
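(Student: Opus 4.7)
The plan is to establish the cyclic implications $(1) \Rightarrow (2) \Rightarrow (3) \Rightarrow (1)$, following the three-argument strategy outlined in the introduction.

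For $(1) \Rightarrow (2)$, I would apply Gentzen's cut-elimination theorem to a derivation of $\overline T \Proves \overline{\phi \Yields \psi}$ in $\lL\kK$. Every member of $\overline T \cup \{\overline{\phi \Yields \psi}\}$ is a universal closure of an implication between $\Sigma(\S)$ formulas, so by the subformula property of cut-free proofs, every formula appearing in the resulting derivation is either a $\Sigma(\S)$ formula (modulo term substitution for its free variables) or an instance of an axiom from $\overline T$. I would translate this derivation rule-by-rule into $\lL\kK_\Sigma(T)$: the left-introductions of implications $\alpha \Yields \beta$ arising from axioms of $T$ are replayed as non-logical cuts sanctioned by $T$, while the universal-quantifier introductions required to unpack $\overline T$ and $\overline{\phi \Yields \psi}$ are absorbed by the eigenvariable-to-term substitutions available in the remaining derivation. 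Once the outer universals have been stripped, what remains is a derivation of $\phi \Proves \psi$ in $\lL\kK_\Sigma(T)$.

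For $(2) \Rightarrow (3)$, I would proceed by induction on the $\lL\kK_\Sigma(T)$-derivation, using deep inference to convert each sequent rule into one or more $\mathrm{RK}_\Sigma(T)$ rewriting steps that act on a subformula inside a surrounding positive context. Connective introductions correspond to logical rewriting axioms; structural rules correspond to monotonic rewrites such as $\phi \Yields \phi \Or \chi$ and $\phi \And \phi \Yields \phi$; and non-logical cuts with an axiom $\alpha \Yields \beta$ of $T$ correspond to the rewriting rule that replaces $\alpha$ by $\beta$ at an arbitrary subformula position. Composing these local rewrites yields a single chain from $\phi$ to $\psi$ inside $\mathrm{RK}_\Sigma(T)$. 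For $(3) \Rightarrow (1)$, I would argue by direct soundness: each rewriting rule of $\mathrm{RK}_\Sigma(T)$ is justified either by pure $\lL\kK$-logic or by an axiom of $T$, and since every connective of $\Sigma$ formulas is monotonic in its arguments, an implication between subformulas lifts to an implication between the ambient formulas, a lifting itself expressible by a short $\lL\kK$-derivation. Chaining the derivations obtained from successive rewrites and reinstating the universal closure at the end yields the required derivation of $\overline T \Proves \overline{\phi \Yields \psi}$ in $\lL\kK$.

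The main obstacle I expect is step $(2) \Rightarrow (3)$: localizing each sequent rule to a single subformula rewrite inside an arbitrary positive context requires a careful simulation of the branching and contraction behavior of the sequent calculus within a rewriting system that only manipulates one formula at a time. The positivity of the $\Sigma$ connectives, emphasized in the introduction, is precisely what licenses this localization; without it the deep-inference translation would fail for left-introductions of disjunction and for cuts on non-atomic formulas. Additionally, carrying out all three reductions within $\mathsf{I\Sigma}$ forces the cut-elimination argument to be kept elementary and its term bounds to be made explicit, which is routine but places constraints on how the simulations may be coded.
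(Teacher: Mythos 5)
Your proposal follows the paper's own proof almost exactly: cut elimination followed by a permutation of the left universal-quantifier instantiations of $\overline T$ so that each implication can be replaced by a $T$-cut for $(1) \Yields (2)$; an induction on the derivation translating each sequent $\Gamma \Proves \Delta$ into a rewriting deduction from $\AND \Gamma$ to $\OR \Delta$ for $(2) \Yields (3)$; and monotonicity of positive contexts plus chaining by ordinary cuts, closing universally at the end, for $(3) \Yields (1)$. The one device you leave unspecified is precisely the paper's resolution of the obstacle you flag in $(2) \Yields (3)$: the two-premise rules (right conjunction, left disjunction, and the $T$-cut) are simulated by first duplicating via $\phi \Yields \phi \And \phi$, running the two inductive deductions deep inside the two copies, and then recombining with the distributive law (9) and its lattice-theoretic dual.
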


Parsons's theorem is the proposition that any $\Pi^0_2$ theorem of $\mathsf{I \Sigma}$ is also a theorem of $\mathsf{PRA}$, so in particular theorem \ref{theorem A} is a theorem of $\mathsf{PRA}$. We refer here primarily to the first-order formulation of $\mathsf{PRA}$ given in Simpson's \textit{Subsystems of Second Order Arithmetic} \cite{Simpson}*{section IX.3}; Parson's theorem is a special case of \cite{Simpson}*{theorem IX.3.16}. Skolem's original formulation of $\PRA$ is a quantifier-free system; by a standard application of Herbrand's theorem, any $\Pi^0_2$ theorem of $\mathsf{I \Sigma}$ is also a theorem of this quantifier-free system in the sense that the theorem provably admits a primitive recursive Skolem function. For each of the implications $(1) \Yields (2)$, $(2) \Yields (3)$, and $(3) \Yields (1)$, the primitive recursive Skolem function is implicit in our proof of theorem \ref{theorem A}. Thus, we have a primitive recursive algorithm for reducing derivations in $\lL \kK$ with nonlogical axioms from $T$, to deductions in the rewriting system $\mathrm{RK}_\Sigma(T)$. Skolem's $\mathsf{PRA}$ is significant as a formalization of Hilbert's finitism, as prominently argued by Tait \cite{Tait}. See example \ref{example PRA} for further discussion of these systems. 

Putting the classical completeness theorem through the equivalence $(1) \Leftrightarrow (3)$, we obtain the following variant.

\begin{corollary}\label{corollary B}
Let $\S$ be any set of finitary function and relation symbols, including a distinguished binary relation symbol $\not \in$. Let $T$ be any set of formulas that are implications between $\Sigma(\S)$ formulas.  Let $\phi$ and $\psi$ be $\Sigma(\S)$ formulas. There is a deduction from $\phi$ to $\psi$ in the rewriting system $\mathrm{RK_\Sigma}$ if and only if every model of $\overline T$ also satisfies $\overline{\phi \Yields \psi}$.
\end{corollary}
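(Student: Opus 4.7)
The plan is to combine the equivalence $(1) \Leftrightarrow (3)$ of Theorem \ref{theorem A} with Gödel's classical completeness theorem for $\lL\kK$. Theorem \ref{theorem A} gives the desired equivalence only under the hypothesis that $\S$ and $T$ are finite, so the principal task here is to bootstrap from the finitary setting to the possibly infinite $\S$ and $T$ allowed by the corollary. This is purely a bookkeeping argument, relying on the fact that both derivations in $\lL\kK$ and deductions in $\mathrm{RK}_\Sigma(T)$ are finite syntactic objects that mention only finitely many axioms and symbols.

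For the ``if'' direction, I would assume that every model of $\overline T$ satisfies $\overline{\phi \Yields \psi}$. By Gödel's completeness theorem for $\lL\kK$, there is an $\lL\kK$-derivation of $\overline T \Proves \overline{\phi \Yields \psi}$. Let $T_0 \subseteq T$ be the finite set of nonlogical axioms actually appearing in this derivation, and let $\S_0 \subseteq \S$ be the finite subvocabulary consisting of $\not\in$ together with the function and relation symbols appearing in $\phi$, $\psi$, and the formulas of $T_0$. The same derivation witnesses $\overline{T_0} \Proves \overline{\phi \Yields \psi}$ in $\lL\kK$ over $\S_0$, so Theorem \ref{theorem A} applied to the finite data $(\S_0, T_0)$ yields a deduction from $\phi$ to $\psi$ in $\mathrm{RK}_\Sigma(T_0)$. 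Since the rewriting rules of $\mathrm{RK}_\Sigma(T_0)$ are included in those of $\mathrm{RK}_\Sigma(T)$, the same deduction works in the larger system.

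For the ``only if'' direction, I would assume a deduction from $\phi$ to $\psi$ in $\mathrm{RK}_\Sigma(T)$. Since deductions are finite objects, the deduction uses only finitely many axioms $T_0 \subseteq T$ and mentions only finitely many symbols, which together with the symbols of $\phi$ and $\psi$ and with $\not\in$ form a finite subvocabulary $\S_0 \subseteq \S$. Applying Theorem \ref{theorem A} to $(\S_0, T_0)$ produces an $\lL\kK$-derivation of $\overline{T_0} \Proves \overline{\phi \Yields \psi}$. By soundness of $\lL\kK$, every model of $\overline{T_0}$ satisfies $\overline{\phi \Yields \psi}$; and since every model of $\overline{T}$ is, by reduct to $\S_0$, a model of $\overline{T_0}$, every model of $\overline T$ satisfies $\overline{\phi \Yields \psi}$.

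There is no substantive obstacle: both halves are immediate from Theorem \ref{theorem A}, classical completeness and soundness, together with the monotonicity of both $\lL\kK$-derivability and deducibility in $\mathrm{RK}_\Sigma(T)$ under enlarging the axioms and the vocabulary. The only point worth flagging is that this step is not itself finitistic, because it invokes Gödel's completeness theorem; this is consistent with the corollary being stated outside the $\mathsf{I\Sigma}$ framework of Theorem \ref{theorem A}.
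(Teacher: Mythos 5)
Your proposal is correct and follows the same route the paper takes: the paper obtains this corollary precisely by ``putting the classical completeness theorem through the equivalence $(1) \Leftrightarrow (3)$'' of Theorem \ref{theorem A}, exactly as you do. Your explicit reduction of the possibly infinite $\S$ and $T$ to finite $\S_0$ and $T_0$ (via the finiteness of derivations and deductions) is a detail the paper leaves implicit, and is handled adequately, though you should let $\S_0$ contain all symbols occurring in the chosen derivation rather than only those of $\phi$, $\psi$, and $T_0$.
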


This corollary can also be obtained directly via the standard term-model construction, using the Rauszer-Sabalski lemma \cite{RauszerSabalski} in place of the Rasiowa-Sikorski lemma.

The deductive system $\lL \kK$ is Gentzen's sequent calculus with the cut rule, except we allow the Boolean sentence letters $\Truth$ and $\Falsehood$. It is depicted in figure \ref{figure 1}. We obtain cut elimination for the this variant as a quick corollary of the cut-elimination theorem for the original system, by adding $\Truth$ and $\Falsehood$ to the antecedent and succedent respectively of each sequent in a given derivation. The cut-elimination theorem is known to be provable in $\mathsf{I\Sigma}$; see for example the proof in reference \cite{Buss}*{section I.2.4}. Our formulation omits mention of negation, which does appear in Gentzen's original system, because a negated formula $\neg \phi$ can be rendered $\phi \Yields \Falsehood$. For brevity, we have combined the contraction and exchange structural rules of Gentzen's original formulation of $\lL\kK$ into a single ``multiset rule''.

The deductive system $\lL\kK_\Sigma(T)$ is essentially a restriction of $\lL \kK$ to $\Sigma$ formulas, with the axioms $T$ showing up as cuts in the deduction:

\begin{definition}\label{definition LSigma}
Let $\S$ be any set of finitary function and relation symbols, including a distinguished binary predicate $\not \in$. Let $T$ be any set of forumas that are implications between $\Sigma(\S)$ formulas. The sequents of the deductive system $\lL\kK_\Sigma(T)$ are those of $\lL\kK$, with each formula a $\Sigma(\S)$ formula. The rules of the deductive system $\lL\kK_\Sigma(T)$ are also those of $\lL\kK$, but with a cut rule for each axiom $\phi(x_1, \ldots, x_n) \Yields \phi'(x_1, \ldots, x_n)$ in $T$ and terms $t_1, \ldots, t_n$:
$$
\infer{\Gamma, \Gamma' \Proves \Delta, \Delta'}{\Gamma \proves \Delta, \phi(t_1, \ldots, t_n) & \Gamma', \phi'(t_1, \ldots, t_n) \Proves \Delta' }
$$
The resulting system is depicted in figure \ref{figure 2}. The rule for implication is automatically excluded because it produces excluded sequents. Thus, the system $\lL\kK_\Sigma(\emptyset)$ is cut-free, and the system $\lL\kK_\Sigma(\{ \phi \Yields \phi \suchthat \phi \in \Sigma(\S)\})$ has just the ordinary cut rule.
\end{definition}

In the rewriting system $\mathrm{RK}_\Sigma(T)$, each cut rule for an axiom of $T$ is implemented as a rewriting rule. The other rules of the system $\lL\kK_\Sigma(T)$ are likewise implemented as rewriting rules.

\begin{definition}
Same assumptions as for \cref{definition LSigma}, above. A deduction in the system $\mathrm{RK}_\Sigma(T)$ is a sequence of $\Sigma(\S)$ formulas, with each succeeding formula obtained from its predecessor by an inference rule. For each implication $\phi(x_1, \ldots, x_n) \Yields \phi'(x_1, \ldots, x_n)$ that is an axiom of $T$ or a logical axiom in figure \ref{figure 3}, and for all terms $t_1, \ldots, t_n$, the system $\mathrm{RK}_\Sigma(T)$ has the rule
$$
\infer{\Xi(P/\phi'(t_1, \ldots,t_n))}{\Xi(P/\phi(t_1, \ldots,t_n))},
$$
where $P$ is a schematic variable in $\Xi$ in place of a subformula, and the slash notation expresses substitution. The variables in the terms $t_1, \ldots, t_n$ may become bound in the formula $\Xi(P/\phi(t_1, \ldots,t_n))$, or equivalently in the formula $\Xi(P/\phi'(t_1, \ldots,t_n))$.
\end{definition}

The system $\mathrm{RK}_\Sigma(T)$ is a deep inference rewriting system, so called because its inferences may be applied to subformulas \cite{Schutte77} \cite{Brunnler04}. The system $\mathrm{RK}_\Sigma(T)$ is very much in the vein of Beklemishev's rewriting system for strictly positive logics \cite{Beklemishev}, with which it shares the unusual feature that even the the nonlogical axioms of the theory are interpreted as rewriting rules. The two systems were developed independently of one another.

\begin{proof}[Proof of $(1) \Yields (2)$.]
Let $\S$ be a finite set of finitary function and relation symbols, including a distinguished binary predicate $\not \in$, and let $\overline T \Proves  \overline{\phi \Yields \psi}$ be a sequent derivable in $\lL\kK$, for $\phi$ and $\psi$ both $\Sigma(\S)$ formulas, and $T$ a finite set of implications between $\Sigma(\S)$ formulas. The sequent $\overline T, \phi \Proves \psi$ is evidently also derivable in $\lL \kK$, so by the cut-elimination theorem, it has a cut-free derivation $d$.

By induction, implication can only occur on the left of a sequent in $d$, and furthermore the left universal quantifier rule is the only rule that acts on a formula in which implication occurs. We can adjust the derivation $d$ so that all universal quantifiers above an implication symbol are placed immediately after the formation of that implication. Formally, we may define a rank that counts the number of nonclosed implications in $d$ that are in the premise of a step that isn't the introduction of a universal quantifier above that implication. By induction on this rank, we can show that there is a cut-free derivation $d'$ of $\overline T, \phi \Proves \psi$ in $\lL\kK$ in which all implications are universally closed immediately after their formation.

By simply removing all formulas containing implication from $d'$, we obtain a derivation of $\phi \Proves \psi$ in $\lL\kK_\Sigma(T)$: Each segment of $d'$ that consists in the formation of an implication followed by its universal closure is transformed into an application of the $T$-cut rule followed by steps in which the premise is equal to the conclusion. These trivial steps may be painlessly removed. Each formula in the resulting derivation is also a formula of the derivation $d'$, and is therefore a subformula of a formula in $\overline T \union \{\phi, \psi\}$, in the inclusive sense appropriate to the statement of the subformula property of cut-free derivations. By induction, each subformula of a $\Sigma(\S)$ formula is itself a $\Sigma(\S)$ formula, because in every formula of the form $\forall v \in s \: \psi$, the term $s$ is formally in the scope of the quantifier $\forall v$, so our derivation does consist of $\Sigma(\S)$ formulas.
\end{proof}

\begin{proof}[Proof of $(2) \Yields (3)$, forward direction]
Let $\S$ be a finite set of finitary function and relation symbols, including a distinguished binary predicate $\not \in$, and let $T$ be a finite set of implications between $\Sigma(\S)$ formulas. For simplicity, we first consider the argument for $\lL\kK_\Sigma(T)$ and $\mathrm{RK}_\Sigma(T)$ without the symbols $\Truth$ and $\Falsehood$. In this case, every sequent $\Gamma \Proves \Delta$ in any derivation in $\lL\kK_\Sigma(T)$ has a nonempty antecedent $\Gamma$ and a nonempty succedent $\Delta$.

We call a sequent $\Gamma \Proves \Delta$ of $\Sigma(\S)$ formulas ``deducible'' if both $\Gamma$ and $\Delta$ are nonempty and there is a deduction from $\AND \Gamma$ to $\OR \Delta$ in the rewriting system $\mathrm{RK}_\Sigma (T)$. The notation $\AND \Gamma$ abbreviates the conjunction $\gamma_1 \And \cdots \And \gamma_n$ associated from the left, where $\Gamma = \gamma_1, \ldots, \gamma_n$, and analogously the expression $\OR \Delta$ abbreviates the disjunction of the formulas in $\Delta$.

Fix a derivation in $\lL\kK_\Sigma(T)$ of some sequent $\phi_0 \Proves \psi_0$. We show that every sequent in this fixed derivation is deducible, by $\Sigma_1$ induction on the length of its derivation. It is sufficient to show for each rule of $\lL\kK_\Sigma(T)$ that if the premise sequents are deducible, then so is the conclusion sequent. Before we examine the rules of $\lL\kK_\Sigma(T)$, we observe that the rules of $\mathrm{RK}_\Sigma(T)$ allow us to commute and reassociate conjuncts (and disjuncts) as follows:
$$\phi \And \psi \YIELDS^{(5)} (\phi \And \psi) \And (\phi \And \psi) \YIELDS^{(4)} \psi \And (\phi \And \psi) \YIELDS^{(3)} \psi \And \phi$$
$$\phi \And (\psi \And \chi) \YIELDS^{(5)} (\phi \And (\psi \And \chi)) \And (\phi \And (\psi \And \chi)) \YIELDS^{(3)} (\phi \And \psi) \And (\phi \And (\psi \And \chi)) \YIELDS^{(4)}  (\phi \And \psi) \And (\phi \And \chi) \YIELDS^{(4)} (\phi \And \psi) \And \chi$$

\vspace{3mm}

Using rules (5) and (3) to duplicate and recombine instances of various conjuncts, and likewise for disjuncts, we find that if $\Gamma \Proves \Delta$ is deducible, and $\Gamma'$ consists of the same formulas as $\Gamma$, and $\Delta'$ consists of the same formulas as $\Delta$, then $\Gamma' \Proves \Delta'$ is also deducible. This addresses the multiset rule. If $\Gamma \Proves \Delta$ is deducible, we also find that $\Gamma, \phi \Proves \Delta$ is deducible using rule $(3)$, and likewise $\Gamma \Proves \Delta, \phi$ is deducible using rule (6). And certainly, $\phi \Proves \phi$ is trivially deducible for any formula $\phi$. This leaves the $T$-cut rule, and the logical rules.

Let $\phi(x_1, \ldots, x_n) \YIELDS \psi(x_1, \ldots, x_n)$ be an axiom of $T$, and assume that the sequents $\Gamma \Proves \Delta, \phi(t_1, \ldots, t_n)$ and $\Gamma, \psi(t_1, \ldots, t_n) \Proves \Delta$ are deducible, for some terms $t_1, \ldots t_n$. We obtain a proof of the sequent $\Gamma \Proves \Delta$ in the following way:
\begin{align*}
\AND \Gamma & \YIELDS^{(5)} \AND \Gamma \And \AND \Gamma \YIELDS \AND \Gamma \And \left( \OR \Delta  \Or \phi(t_1, \ldots t_n)\right) 
\\ & \YIELDS^{(9)}\left(\AND \Gamma \And \OR \Delta\right) \Or \left(\AND \Gamma \And \phi(t_1, \ldots, t_n)\right) 
\\ &\YIELDS^{(0)} \left(\AND \Gamma \And \OR \Delta\right)  \Or \left(\AND \Gamma \And \psi(t_1, \ldots, t_n)\right)
\\ & \YIELDS^{(4)}\left(\AND \Gamma \And \OR \Delta\right) \Or \OR \Delta   \YIELDS^{(8)} \OR \Delta \Or \OR \Delta \YIELDS \OR \Delta
\end{align*}
This addresses the $T$-cut rule.

Only the logical rules remain. If $\Gamma, \phi \Proves \Delta$ is a deducible sequent then so are the sequents $\Gamma, \phi \And \psi \Proves \Delta$ and $\Gamma, \psi \And \phi \Proves \Delta$, using rules (3) and (4). The right disjunction rules are addressed likewise. If the sequents $\Gamma \Proves \Delta, \phi$ and $\Gamma \Proves \Delta, \psi$ are both deducible, then so is the sequent $\Gamma \proves \Delta, \phi \And \psi$:
$$ \AND \Gamma \YIELDS \AND \Gamma \And \AND \Gamma
\YIELDS \left( \OR \Delta \Or \phi\right) \And \left(  \OR \Delta \Or \psi\right) \YIELDS \OR \Delta \Or (\phi \And \psi)
$$
The final implication is a distributive law that is known to be equivalent to the distributive law (9) over rules (3)-(8), which are the axioms of a lattice. The left disjunction rule is addressed likewise.

If the sequent $\Gamma \Proves \Delta, \phi(v/t)$ is deducible, then so is $\Gamma \Proves \Delta, \exists v\: \phi$, using rule (10). If the sequent $\Gamma, t \not\in s \Or \phi(v/t) \Proves \Delta$ is deducible, then so is $\Gamma, \forall v \in s\: \phi \proves \Delta$, using rule (13). If the sequent $\Gamma, \phi(v/w) \Proves \Delta$ is deducible for some variable $w$ not free in $\Gamma$, $\Delta$, or $\phi$, then so is  $\Gamma, \exists v\: \phi \Proves \Delta$:
\begin{align*}
\AND \Gamma \And \exists v\: \phi & \YIELDS
\AND \Gamma \And \exists v\: \phi(v/w)(w/v) \\ & \YIELDS ^{(10)}
\AND \Gamma \And \exists v\: \exists w\: \phi(v/w) \\ & \YIELDS^{(11)}
\AND \Gamma \And \exists w\: \phi(v/w) \\ & \YIELDS^{(12)}
\exists w\: \AND \Gamma \And \phi(v/w) \\ & \YIELDS^{(11)}
\exists w\: \OR \Delta \YIELDS \OR \Delta
\end{align*}
Finally, if $\Gamma \Proves \Delta, w \not \in s \Or \phi(v/w)$ is deducible for some variable $w$ not in free in $\Gamma$, $\Delta$, $\phi$, or $s$, then so is $\Gamma \Proves \Delta, \forall v \in s\: \phi$:
\begin{align*}
\AND \Gamma & \YIELDS^{(14)}
\forall w \in s\: \AND \Gamma \\ & \YIELDS
\forall w \in s\: \OR \Delta \Or (w \not \in s \Or \phi(v/w))\\ & \YIELDS^{(15)}
\OR \Delta \Or \forall w \in s\: w \not \in s \Or \phi(v/w) \\ & \YIELDS^{(14a)}
\OR \Delta \Or \forall w \in s\: \phi(v/w) \\ & \YIELDS^{(14)}
\OR \Delta \Or\forall v \in s\: \forall w \in s\: \phi(v/w)  \\ & \YIELDS^{(13)}
\OR \Delta \Or\forall v\in s\: v \not \in s \Or \phi \\ &\YIELDS^{(14a)}
\OR \Delta \Or\forall v \in s\: \phi
\end{align*}
The expression $\OR \Delta$ is undefined when $\Delta$ is empty, but in this case the obvious simple variant of the deduction suffices. Note that this variant proof does not appeal to rule (15) of $\mathrm{RK}_\Sigma(T)$.

We now consider the argument for $\lL\kK_\Sigma(T)$ and $\mathrm{RK}_\Sigma$ with $\Truth$ and $\Falsehood$. A derivation in $\lL\kK_\Sigma(T)$ that uses the rules for $\Truth$ and $\Falsehood$ may be easily modified to a derivation that does not use these rules, by adding $\Truth$ to the antecedent of every sequent and adding $\Falsehood$ to the consequent of every sequent. Applying the previous argument, we obtain a deduction from $\Truth \And \phi_0$ to $\Falsehood \Or \psi_0$ in $\mathrm{RK}_\Sigma(T)$; axioms (1) and (2) then yield a deduction from $\phi_0$ to $\psi_0$.
\end{proof}

\begin{proof}[Proof of $(3) \Yields (1)$.]
Let $\S$ be a finite set of finitary function and relation symbols, including a distinguished binary predicate $\not \in$, and let $T$ be a finite set of implications between $\Sigma(\S)$ formulas. For each axiom $\phi(x_1, \ldots, x_n) \Yields \psi(x_1, \ldots, x_n)$ in $T$, and all terms $t_1, \ldots, t_n$, the sequent $\overline T, \phi(t_1, \ldots, t_n) \Proves \psi(t_1, \ldots, t_n)$ is derivable in $\lL \kK$. In fact, it is derivable in the sequent calculus $\lL \iI$ for intuitionistic logic, obtained from $\lL \kK$ by requiring that every sequent $\Gamma \proves \Delta$ have at most one formula in the succedent $\Delta$: $$ \infer{\overline T, \phi(t_1, \ldots, t_n) \Proves \psi(t_1, \ldots, t_n)}{\infer{\overline{\phi \Yields \psi}, \phi(t_1, \ldots, t_n) \Proves \psi(t_1, \ldots, t_n)}{ \infer{ \phi(t_1, \ldots, t_n) \Yields \psi(t_1, \ldots, t_n), \phi(t_1, \ldots, t_n) \Proves \psi(t_1, \ldots, t_n)}{ \phi(t_1, \ldots t_n) \Proves \phi(t_1, \ldots, t_n) & \psi(t_1, \ldots t_n) \Proves \psi(t_1, \ldots, t_n)}}}
$$

We may similarly derive the sequent $\overline T, \chi \Proves \chi'$ in $\lL \kK$ when $\chi \Yields \chi'$ is one of the logical rules (1)-(15) of $\mathrm{RK}_\Sigma(T)$. In place of appealing to an axiom of $T$ during the derivation, we apply the corresponding logical rule of $\lL \kK$. For example, for an instance $\phi \And \psi \Yields \phi$ of rule (3), we may derive the sequent $\overline T, \phi \And \psi \Proves \phi$ as follows:
$$
\infer{ \overline T, \phi \And \psi \Proves \phi}{\infer{\phi \And \psi \Proves \phi}{\phi \Proves \phi}}
$$
We may be certain that such derivations exist for all the logical rules of $\mathrm{RK}_\Sigma(T)$, by the completeness of the system $\lL \kK$; in fact, all these derivations are fairly short. The only such derivation not in $\lL \iI$ is rule (15). An implication $\forall v \in s \: (\chi \Or \phi) \YIELDS \chi \Or (\forall v \in s \: \phi)$ is generally not intuitionistically valid. The other derivations either follow the two-step pattern of the above example for $\phi \And \psi \Yields \phi$, or are depicted in figures \ref{figure 4} -- 7.

If a sequent $\overline T, \chi \Proves \chi'$ is derivable in $\lL \kK$, then so is the sequent $\overline T, \Xi(P/\chi) \Proves \Xi(P/\chi')$, for any formula $\Xi$ with a single schematic variable $P$ in place of a subformula. We derive the sequent $\overline T, \Xi(P/\chi) \Proves \Xi(P/\chi')$ recursively, by following the logical structure of the formula $\Xi$. We are given a derivation of $\overline T, \Xi(P/\chi) \Proves \Xi(P/\chi')$ when $\Xi$ is simply $P$, and we derive $\overline T, \Xi(P/\chi) \Proves \Xi(P/\chi')$ by the weakening rule when $\Xi$ is an atomic formula other than $P$. When $\Xi$ is not atomic, we apply the appropriate logical rules. For example:
$$
\infer{\overline T, \Xi_0(P/\chi)\Or  \Xi_1(P/\chi)  \Proves\Xi_0(P/\chi') \Or \Xi_1(P/\chi')}{
\infer{\overline T, \Xi_0(P/\chi) \Proves \Xi_0(P/\chi') \Or \Xi_1(P/\chi')}{
{\overline T, \Xi_0(P/\chi) \proves \Xi_0(P/\chi')}
} &
\infer{\overline T, \Xi_1(P/\chi) \Proves \Xi_0(P/\chi) \Or \Xi_1(P/\chi') }{
{\overline T, \Xi_1(P/\chi) \proves \Xi_1(P/\chi')}
}
}
$$
All such derivations occur entirely within $\lL \iI$.

A deduction in the rewriting system $\mathrm{RK}_\Sigma (T)$ is a sequence of formulas $\xi_0, \ldots, \xi_n$ such that for each step $i \in \{1, \ldots, n\}$ we can find a formula $\Xi_i$ with schematic variable P, and a rule $\chi_{i-1} \Yields \chi_i$ of $\mathrm{RK}_\Sigma(T)$, such that $\xi_{i-1}$ is $\Xi_i(\chi_{i-1})$ and $\xi_i$ is $\Xi_i(\chi_i)$. Thus, we have shown that for each step $i \in \{1, \ldots, n\}$, the sequent $\overline T, \xi_{i-1} \proves \xi_i$ is derivable in $\lL \kK$. Applying the cut rule $n-1$ times, we obtain a derivation of $\overline T, \xi_0 \Proves \xi_n$ in $\lL \kK$, and therefore of $\overline T\Proves \overline {\xi_0 \Yields \xi_n}$, as desired.
\end{proof}

We presently observe that all three parts of the argument remain sound if we restrict the systems $\lL \kK$ and $\lL \kK_\Sigma(T)$ to sequents $\Gamma \Proves \Delta$ having at most one formula in the succedent $\Delta$, provided that we also remove rule (15) from the system $\mathrm{RK}_\Sigma(T)$. Our interest in this observation is motivated by the fact that the sequent calculus $\lL \iI$, obtained by restricting $\lL \kK$ in this way, is known to formalize intuitionistic reasoning. 

\begin{definition}
A derivation in $\lL \iI$ is a derivation in $\lL \kK$ each of whose sequents $\Gamma \Proves \Delta$ has at most one formula in its succedent $\Delta$.  Similarly, a derivation in $\lL \iI_\Sigma(T)$ is a derivation in $\lL \kK_\Sigma(T)$ each of whose sequents has at most one formula in its succedent.
\end{definition}

\begin{theorem}[$\ISigma$]
Same assumptions as of \cref{theorem A}. The following are equivalent:
\begin{enumerate}
\item There is a derivation of the sequent $\overline T \Proves \overline{\phi \Yields \psi}$ in the sequent calculus $\lL \iI$.
\item There is a derivation of the sequent $\phi \Proves \psi$ in the sequent calculus $\lL \iI_\Sigma(T)$.
\item There is a deduction from $\phi$ to $\psi$ in the rewriting system $\mathrm{RK}_\Sigma(T)$ that does not appeal to rule (15).
\end{enumerate}
\end{theorem}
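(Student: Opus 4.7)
The plan is to rerun each of the three implications in the proof of Theorem A, verifying that the restriction to at most one formula in each succedent is preserved throughout and that rule (15) of $\mathrm{RK}_\Sigma(T)$ is never invoked. This amounts to elaborating the observation already made in the paragraph preceding the statement.

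For $(1) \Yields (2)$: The cut-elimination theorem is provable for $\lL\iI$ in $\ISigma$ by essentially the same inductive argument as for $\lL\kK$. The subsequent manipulation from the proof of Theorem A --- advancing universal-quantifier introductions to sit immediately above the corresponding implication introductions, then excising all formulas containing implication --- is a purely local rearrangement that adds no formula to any succedent. So a cut-free derivation in $\lL\iI$ yields a derivation in $\lL\iI_\Sigma(T)$, via the same extraction as in the classical case.

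For $(2) \Yields (3)$: I would inspect the classical translation rule by rule. All of the translations displayed in the proof of Theorem A use only rules (1)--(14) of $\mathrm{RK}_\Sigma(T)$, with one exception: the right bounded universal rule, whose classical translation invokes the distributive law (15) in the form $\forall w \in s\: (\OR \Delta \Or (w \not\in s \Or \phi)) \YIELDS \OR \Delta \Or \forall w \in s\: (w \not\in s \Or \phi)$. Under the intuitionistic restriction, however, any application of the right bounded universal rule has $\forall v \in s\: \phi$ as its unique succedent formula, so $\Delta$ is empty; the paper's ``simple variant'' deduction then applies and bypasses rule (15). All other rule translations go through verbatim, since each either keeps the succedent fixed or introduces a single formula. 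Subtleties around $\Truth$ and $\Falsehood$ are handled exactly as in the classical proof, with the same antecedent/succedent padding trick, noting that the padding still respects the single-succedent restriction once adapted to $\lL\iI$. This is the main step to verify: one must confirm that rule (15) really is used only in the right bounded universal translation.

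For $(3) \Yields (1)$: The proof of Theorem A constructs, for each axiom of $T$ and each logical rule $\chi \Yields \chi'$ of $\mathrm{RK}_\Sigma(T)$, a derivation of $\overline T, \chi \Proves \chi'$ in $\lL\kK$, and notes explicitly that all such derivations, except the one for rule (15), lie within $\lL\iI$. Since a deduction in $\mathrm{RK}_\Sigma(T)$ not appealing to rule (15) calls on no such exceptional axiom, each locally derived sequent is in $\lL\iI$. The recursive deep-inference construction lifting $\overline T, \chi \Proves \chi'$ to $\overline T, \Xi(P/\chi) \Proves \Xi(P/\chi')$ uses only logical rules of $\lL\iI$ applied to single-succedent hypotheses, so it stays within $\lL\iI$. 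Chaining by cut then produces a derivation of $\overline T \Proves \overline{\xi_0 \Yields \xi_n}$ in $\lL\iI$, completing the cycle.
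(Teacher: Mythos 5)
Your proposal matches the paper's own proof essentially point for point: it reuses cut elimination for $\lL\iI$ and notes the classical extraction never enlarges succedents, observes that rule (15) is invoked only in the right bounded universal translation where the intuitionistic restriction forces $\Delta$ to be empty so the simple variant applies, and notes that every local derivation except the one for rule (15) already lies in $\lL\iI$ so the deep-inference lifting and cut-chaining stay intuitionistic. The only spot where you are slightly more explicit than the paper is the $\Truth$/$\Falsehood$ padding under the single-succedent restriction, which you assert rather than check, but the paper glosses over this identically.
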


\begin{proof}
Our proof of the implication $(1) \Yields (2)$ for theorem \ref{theorem A} begins by fixing a cut-free derivation of $\overline T, \phi \Proves \psi$, and then proceeds to control and suppress the formation of the sentences in $\overline T$, without altering any succedents. Thus, to conclude that this proof adapts to the sequent calculus $\lL\iI$, it is sufficient to recall that $\lL \iI$ allows the elimination of cuts.

Our proof of the implication $(2) \Yields (3)$ uses rule (15) of $\mathrm{RK}_\Sigma(T)$ in only one place: to show that if $\Gamma \Proves \Delta, w \not \in s \Or \phi(v/w)$ is deducible, then so is $\Gamma \Proves \Delta, \forall v \in s\: \phi$. If a sequent of the form $\Gamma \Proves \Delta, w \not \in s \Or \phi(v/w)$ appears in a derivation in $\mathrm{RK}_\Sigma(T)$, then the sequence $\Delta$ must be empty. In this case, a simplified proof of $\Gamma \Proves \Delta, \forall v \in s\: \phi$ is available, which does not use rule (15).

Our proof of the implication $(3) \Yields (1)$ consists essentially in showing that for each rule $\chi \Yields \chi'$ of $\mathrm{RK}_\Sigma(T)$, the sequent $\overline T, \chi \proves \chi'$ is derivable in $\lL\kK$. The only rule of $\mathrm{RK}_\Sigma(T)$ for which the corresponding sequent is not derivable using $\lL \iI$ is rule (15). The derivations of those rules of $\mathrm{RK}_\Sigma(T)$ whose derivations have height greater than $1$ are given in figures \ref{figure 4} -- 7.
\end{proof}

There is an evident correspondence between many of the rules of $\lL \kK$, $\lL\kK_\Sigma(T)$, and $\mathrm{RK}_\Sigma(T)$, which runs through all three parts of the argument. The argument remains valid if we omit the logical rules for a logical symbol such as $\Falsehood$ or $\Exists$ from all three formal systems. However, we cannot omit the rules for conjunction and disjunction in this way, because rules (3)-(9) of $\mathrm{RK}_\Sigma(T)$ play a structural, as well as a logical, role.

\begin{theorem}[$\ISigma$]\label{no symbols}
Same assumptions as of theorem 3. The statements of theorem 3 and theorem 9 remain true if we modify the formal systems $\lL \kK$, $\lL\kK_\Sigma(T)$, and $\mathrm{RK}_\Sigma(T)$ by excluding all formulas containing any symbols from a fixed subset of the symbols $\Truth$, $\Falsehood$, $\Exists$, $\Forall$ and $\not \in$.
\end{theorem}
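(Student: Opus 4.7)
The plan is to trace through the three parts of the proof of theorem \ref{theorem A} (and the intuitionistic version in theorem 9) and verify that each step respects the restricted fragment, i.e.\ introduces no formula containing a symbol from the chosen subset $X \subseteq \{\Truth, \Falsehood, \Exists, \Forall, \not\in\}$. This is essentially a bookkeeping exercise that exploits the already modular structure of the original arguments; once one confirms that each simulation and each syntactic derivation is confined to rules for the connectives it is meant to handle, the three implications lift unchanged to the restricted setting.

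For $(1) \Yields (2)$: the cut-free derivation of $\overline T, \phi \Proves \psi$ supplied by the cut-elimination theorem enjoys the subformula property in the inclusive sense, and subformulas of formulas that avoid $X$ also avoid $X$, so the entire cut-free derivation lies in the restricted fragment. The subsequent rearrangement pushing each universal closure immediately after the formation of its implication merely reorders rule applications on existing formulas, so no new symbol is introduced.

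For $(2) \Yields (3)$: the simulation of each rule of $\lL\kK_\Sigma(T)$ displayed in the original proof uses only rules (3)--(9) of $\mathrm{RK}_\Sigma(T)$, which handle $\And$ and $\Or$ structurally, together with rewrites specific to the symbol being simulated. Hence if $\Exists$, $\Forall$, or $\not\in$ lies in $X$, the corresponding logical rule of $\lL\kK_\Sigma(T)$ never arises in the derivation and its simulation is not invoked. The treatment of $\Truth$ and $\Falsehood$ requires a little care: empty antecedents in $\lL\kK_\Sigma(T)$ arise only from $\Truth$-rules and empty succedents only from $\Falsehood$-rules, so if both symbols lie in $X$ no padding is needed; if exactly one lies in $X$ we pad only the potentially empty side using the other symbol and strip it off via the corresponding axiom (1) or (2); and if neither lies in $X$ we apply the original padding trick unchanged.

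For $(3) \Yields (1)$: each rule $\chi \Yields \chi'$ of $\mathrm{RK}_\Sigma(T)$ is derived as the sequent $\overline T, \chi \Proves \chi'$ in $\lL\kK$ using logical rules only for symbols actually occurring in $\chi$ or $\chi'$, as illustrated in figures 4--7 and the two-step template. Rules of $\mathrm{RK}_\Sigma(T)$ that involve an excluded symbol cannot arise in a deduction within the restricted fragment, so we do not need to derive them. For the intuitionistic analogue, rule (15) of $\mathrm{RK}_\Sigma(T)$ involves $\Forall$ and $\not\in$ and is therefore automatically absent whenever either of these is in $X$; the other cases carry over verbatim from theorem 9. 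The main obstacle is the systematic case check confirming that no simulation in $(2) \Yields (3)$ and no syntactic derivation in $(3) \Yields (1)$ silently routes through an excluded symbol, but this follows by direct inspection of the explicit displays in the original proofs.
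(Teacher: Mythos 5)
Your proposal is correct and follows exactly the route the paper takes: its entire proof of this theorem is the single instruction ``Examine all three parts of the argument for appeals to the relevant logical rules,'' and your write-up is just that inspection carried out explicitly (including the only genuinely delicate point, the $\Truth$/$\Falsehood$ padding in $(2)\Yields(3)$, which you handle correctly). No further comment is needed.
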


\begin{proof}
Examine all three parts of the argument for appeals to the relevant logical rules.
\end{proof}

Recall that Johansson's minimal logic differs from intuitionistic reasoning in its rejection of \textit{ex flaso quodlibet}, the principle that everything follows from a contradiction. It may be formalized by excluding the logical rule for $\Falsehood$ from $\lL \iI$; we call the resulting system $\lL \mM$. In $\lL \mM$, the symbol $\Falsehood$ is treated just as a nonlogical nullary relation symbol. Thus, removing $\Falsehood$ as logical symbol, according to \cref{no symbols}, and adding it as a nullary relation symbol to our vocabulary $\S$, we find that derivability in minimal logic corresponds to provability in $\lL \iI(T)$ without rule (2): $\Falsehood \Yields \phi$.

\begin{corollary}[$\ISigma$]
Same assumptions as of theorem 3. Additionally, let $\mathrm{RI}_\Sigma(T)$ be $\mathrm{R K}_\Sigma(T)$ with rule (15) omitted, and let $\mathrm{RM}_\Sigma(T)$ be $\mathrm{RI}_\Sigma(T)$ with rule $(2)$ omitted.
\begin{enumerate}[(1)]
\item There is a derivation of $\overline T \Proves\overline{ \phi \Yields \psi}$ in $\lL \kK$ iff there is a deduction from $\phi$ to $\psi$ in $\mathrm{RK}_\Sigma(T)$.
\item There is a derivation of $\overline T \Proves \overline {\phi \Yields \psi}$ in $\lL \iI$ iff there is a deduction from $\phi$ to $\psi$ in $\mathrm{RI}_\Sigma(T)$.
\item There is a derivation of $\overline T \Proves \overline{\phi \Yields \psi}$ in $\lL \mM$ iff there is a deduction from $\phi$ to $\psi$ in $\mathrm{RM}_\Sigma(T)$.
\end{enumerate}
\end{corollary}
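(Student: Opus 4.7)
The plan is to derive all three equivalences from results already established. Parts (1) and (2) require no new work: statement (1) of the corollary is precisely the equivalence $(1) \Leftrightarrow (3)$ in Theorem \ref{theorem A}, and statement (2) is the equivalence $(1) \Leftrightarrow (3)$ in Theorem 9. Only statement (3) requires an additional argument, and the strategy is the one sketched in the remark immediately preceding the corollary.

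For statement (3), I would proceed in two steps. First, apply Theorem \ref{no symbols} with the subset $\{\Falsehood\}$ of excluded logical symbols. Combined with the intuitionistic case of Theorem 9 that the excerpt explicitly allows for in Theorem \ref{no symbols}, this yields: $\overline T \Proves \overline{\phi \Yields \psi}$ is derivable in $\lL\iI$ restricted so that $\Falsehood$ does not appear as a logical constant if and only if there is a deduction from $\phi$ to $\psi$ in $\mathrm{RI}_\Sigma(T)$ restricted in the analogous way. Second, enlarge the vocabulary $\S$ by adding $\Falsehood$ as a nullary relation symbol; this changes neither the rules of the sequent calculus nor the rewriting rules, because the new $\Falsehood$ is treated as a generic atomic formula.

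It remains to identify the two restricted systems with $\lL\mM$ and $\mathrm{RM}_\Sigma(T)$. On the sequent-calculus side, this identification is the very definition of Johansson's minimal logic recalled in the preceding remark: removing the logical rule for $\Falsehood$ while keeping $\Falsehood$ as a propositional atom is exactly $\lL\mM$. On the rewriting side, one only needs to observe that rule (2), namely $\Falsehood \Yields \phi$, is the unique logical axiom of $\mathrm{RK}_\Sigma(T)$ that mentions the logical constant $\Falsehood$; hence $\mathrm{RI}_\Sigma(T)$ with $\Falsehood$ excluded as a logical symbol and added to the vocabulary coincides with $\mathrm{RM}_\Sigma(T)$ as defined in the corollary.

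I do not anticipate any serious obstacle. The entire proof is a bookkeeping exercise on top of Theorems \ref{theorem A}, 9, and \ref{no symbols}; the only point that merits explicit checking is that rule (2) really is the sole rule of $\mathrm{RK}_\Sigma(T)$ mentioning $\Falsehood$, which is immediate from inspection of the logical axioms in figure \ref{figure 3}.
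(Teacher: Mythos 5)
Your proposal is correct and matches the paper's own argument: the paper gives no separate proof for this corollary, relying instead on the remark immediately preceding it, which derives part (3) exactly as you do — by applying Theorem \ref{no symbols} to exclude $\Falsehood$ as a logical symbol, reintroducing it as a nonlogical nullary relation symbol, and observing that this removes precisely rule (2) from the rewriting system and the $\Falsehood$ rule from $\lL\iI$, yielding $\lL\mM$ and $\mathrm{RM}_\Sigma(T)$. Parts (1) and (2) are, as you say, just restatements of the $(1)\Leftrightarrow(3)$ equivalences of Theorems \ref{theorem A} and 9.
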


\begin{remark}
In fact, the statements of theorem 3, theorem 9, theorem 10, and corollary 11 remain true if we exclude the binary relation symbol $\not \in$, and simply allow unbounded universal quantification. Here, the bounded universal quantifier rules of the system $\lL \kK_\Sigma(T)$ are replaced with their standard variants,
$$\infer{\Gamma, \forall v\: \phi}{\Gamma, \phi(v/t) \Proves \Delta}  \qquad \infer{\Gamma \Proves \Delta, \forall v \: \phi}{\Gamma \Proves \Delta, \phi(v/w)},$$
and rules (13), (14), (14a), and (15) of $\mathrm{RK}_\Sigma(T)$ are replaced with simply:
\begin{enumerate}
\item[(13)] $\forall v\:  \phi \YIELDS \phi(v/t)$
\item[(14)] $\phi \YIELDS \forall v\: \phi$
\item[(15)] $\forall v\: \chi \Or \phi \Yields \chi \Or \forall v\: \phi$
\end{enumerate}
\end{remark}

\begin{example}\label{example PRA}
The literature contains many formulations of Skolem's theory of primitive recursive arithmetic that are equivalent in the sense that they deduce the same atomic formulas. Those that allow $\Sigma^0_0$ formulas deduce the same $\Sigma^0_0$ formulas, and those that allow $\Sigma^0_1$ formulas deduce the same $\Sigma^0_1$ formulas. Each $\Sigma^0_0$ formula can be viewed as a primitive recursive process for computing truth values; and the formulations that allow $\Sigma_0^0$ formulas deduce the equivalence of each $\Sigma_0^0$ formula with the atomic formula formalizing this primitive recursive process. Furthermore, the formulations of primitive recursive arithmetic that allow $\Sigma^0_1$ formulas are constructive in the sense that if they deduce a $\Sigma^0_1$ formula $\exists v\: \phi$, then they also deduce the $\Sigma^0_0$ formula $\phi(v/t)$ for some term $t$. Thus, one might say that the various formulations of primitive recursive arithmetic are equivalent in their $\Pi^0_2$ claims. This is exactly the logical complexity of the inferences treated in the present paper.

Skolem's original formulation of primitive recursive arithmetic uses just the $\Sigma_0^0$ formulas in the vocabulary of primitive recursive arithmetic, which consists of the equality relation symbol, the zero constant symbol, and one function symbol for each primitive recursive process \cite{Skolem67}. Goodstein found an attractive ``logic-free'' formulation, i. e., a formulation using just the atomic formulas of this vocabulary \cite{Goodstein54}. In both of these formulations, the free variables of a deduced formula are implicitly universally quantified, as evident from the induction rule, which deduces $\phi(y)$ from $\phi(0)$ and $\phi(x) \Yields \phi(S(x))$. The implication should be valid for all values of $x$, or at least those less than $y$, for the conclusion to be justified. This implicit universal quantification is absent in the deductions of the systems $\mathrm{RK}_\Sigma(T)$, in which any free variables may be interpreted as parameters. Thus, these formalizations of primitive recursive arithmetic are not finitistic in the strict sense of the present paper. However, this is a technicality. Each such deduction is finitistic if it is read contrapositively: if the conclusion of a step admits a counter example, then so must on of its premises.

We introduce a formulation of primitive recursive arithmetic as a class $\mathrm{PRA}$ of implications between $\Sigma$ formulas. The advantages of $\mathrm{RK}_\Sigma(\mathrm{PRA})$ over Skolem's original formulation are, first, that $\mathrm{RK}_\Sigma(\mathrm{PRA})$ is explicitly finitistic in the sense just discussed, and second, that the language of $\mathrm{RK}_\Sigma(\mathrm{PRA})$ includes the quantifiers, enabling a natural expression of finitistic propositions. Glossing this first advantage, any free variables in a deduction of $\mathrm{RK}_\Sigma(T)$ may be read as parameters, rather than as universally quantified variables. Glossing this second advantage, the bounded quantifiers that naturally occur in finitistic propositions are expressible by logical symbols of the language of $\mathrm{RK}_\Sigma(T)$, whereas these bounded quantifiers must be expressed using function symbols in Skolem's original formulation. Thus, the formalization of finitistic reasoning to $\mathrm{RK}_\Sigma(T)$ is more or less direct, whereas the formalization of finitistic reasoning to Skolem's original formulation involves some ``coding''. The variant of primitive recursive arithmetic introduced in this paper differs from the original in the opposite direction to Goodstein's; his variant is ``logic-free'' and our variant is ``logic-rich''.

The axioms of our formulation of primitive recursive arithmetic are essentially those of the first-order formulation currently used in the reverse mathematics community. Of course, the first-order formulation is not in itself finitistic because some of its formulas quantify over the entire class of natural numbers, which we take to be an incomplete totality in the context of finitistic mathematics. However, its axioms are essentially $\Pi^0_2$, and therefore expressible as implications between $\Sigma$ formulas. Theorem 3 implies that any such implication $\phi \Yields \psi$ deducible in the first-order system is also deducible in the rewriting system $\mathrm{RK}_\Sigma(\mathrm{PRA})$. We adopt the axioms of the first-order system given in Simpson's standard reference text \cite{Simpson}*{section IX.3}; the $\Sigma^0_0$ induction scheme must be rearranged slightly to fit the pattern. We render the distinguished binary relation symbol $\not \in$ as $\not <$, revealing its intended interpretation. We also include its logical complement $<$, on the intuition that any two natural numbers are decidably comparable. The resulting system is depicted in figure \ref{figure 9}. Combining theorem 3 with Parsons's theorem and related results in reverse mathematics, we obtain the following corollary:
\end{example}

\begin{corollary}[$\ISigma$]\label{corollary PRA}
Let $\phi$ and $\psi$ be $\Sigma$ formulas in the language of primitive recursive arithmetic, with binary relation symbols $<$, $\not <$, $=$, and $\neq$. The sentence $\overline {\phi \Yields \psi}$ is deducible in $\mathsf{WKL_0}$ if and only if there is a deduction form $\phi$ to $\psi$ in the rewriting system $\mathrm{RK}_\Sigma(\mathrm{PRA})$ whose class of nonlogical axioms $\mathrm{PRA}$ is given in figures \ref{figure 8} and \ref{figure 9}.
\end{corollary}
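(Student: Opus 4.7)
The plan is to combine Theorem~\ref{theorem A} (specifically the equivalence $(1) \Leftrightarrow (3)$) with the standard conservation results of reverse mathematics, as already announced in Example~\ref{example PRA}. First I observe that, since $\phi$ and $\psi$ are $\Sigma$ formulas, the sentence $\overline{\phi \Yields \psi}$ is, up to a provable rewriting that pulls the existential quantifiers of $\psi$ outside the implication, a $\Pi^0_2$ sentence. The celebrated chain I have in mind is: $\WKL$ is $\Pi^1_1$-conservative over $\RCA$ (Harrington), $\RCA$ is conservative over $\ISigma$ for first-order arithmetic statements, and $\ISigma$ is $\Pi^0_2$-conservative over the first-order formulation of $\PRA$ (Parsons, Mints, Takeuti). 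Composing these three reductions yields $\Pi^0_2$-conservativity of $\WKL$ over first-order $\PRA$.

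For the forward direction, I assume $\WKL \Proves \overline{\phi \Yields \psi}$. Conservativity lifts this to a proof of the same sentence in first-order $\PRA$. As discussed in Example~\ref{example PRA}, Simpson's first-order axiomatization of $\PRA$ is, modulo rearranging the $\Sigma^0_0$ induction schema into the shape of a $\Sigma$-implication, exactly the universal closure $\overline{\mathrm{PRA}}$ of the axioms listed in figures~\ref{figure 8} and~\ref{figure 9}. Hence the sequent $\overline{\mathrm{PRA}} \Proves \overline{\phi \Yields \psi}$ is derivable in $\lL\kK$, and Theorem~\ref{theorem A} then produces a deduction from $\phi$ to $\psi$ in $\mathrm{RK}_\Sigma(\mathrm{PRA})$.

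For the reverse direction, a deduction from $\phi$ to $\psi$ in $\mathrm{RK}_\Sigma(\mathrm{PRA})$ yields, by Theorem~\ref{theorem A} in the opposite direction, a derivation of $\overline{\mathrm{PRA}} \Proves \overline{\phi \Yields \psi}$ in $\lL\kK$, and therefore a classical first-order proof of $\overline{\phi \Yields \psi}$ from the assumptions $\overline{\mathrm{PRA}}$. Since each of these assumptions is already provable in $\RCA$, and a fortiori in $\WKL$, we conclude that $\WKL \Proves \overline{\phi \Yields \psi}$.

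I expect the main obstacle to be not the conservativity chain itself, which is well documented, but the bookkeeping needed to align the axioms in figures~\ref{figure 8} and~\ref{figure 9} with Simpson's standard presentation. One must verify that each of Simpson's axioms is (or is straightforwardly derivable from) the universal closure of an axiom on our list, and, in the opposite direction, that the pair of relation symbols $<$ and $\not <$ behave as classical complements of one another given the listed defining axioms, so that the $\Sigma$ formulas of the two formulations genuinely coincide. Most of this verification is routine, but it is the one step of the corollary which is not purely a formal consequence of Theorem~\ref{theorem A} together with the cited conservativity theorems.
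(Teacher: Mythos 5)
Your proposal is correct and follows essentially the same route as the paper: invoke the reduction of $\mathsf{WKL_0}$ to first-order $\mathsf{PRA}$ (the paper cites Simpson's theorem IX.3.16 directly, together with Burgess for its provability in $\ISigma$), apply the equivalence $(1)\Leftrightarrow(3)$ of Theorem~\ref{theorem A}, and then do the axiom-alignment bookkeeping you correctly identify as the real content. The paper carries out that last step by translating each atomic formula $s<t$ of Simpson's system as $t \dotminus s \neq 0$, checking each of Simpson's axioms against figures~\ref{figure 8} and~\ref{figure 9}, and proving $x< y \Leftrightarrow y \dotminus x \neq 0$ by induction on $y$ to handle the defined symbols $\neq$, $<$, and $\not<$ --- exactly the verification you flagged.
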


The system $\mathsf{WKL_0}$ \cite{Simpson}*{definition I.10.1} includes the binary relation symbol $<$. In the statement of this corollary, we identify each atomic formula $s \neq t$ with the formula $\neg s = t$, and each atomic formula $s \not < t$ with the formula $\neg s < t$. 

\begin{proof}
In light of the celebrated reduction of $\mathsf{WKL}_0$ to $\mathsf{PRA}$ \cite{Simpson}*{theorem IX.3.16}, which is known to be a theorem of $\ISigma$ \cite{Burgess}*{section 4}, it is enough to argue that the sentence $\overline{\phi \Yields \psi}$ is deducible in $\mathsf{PRA}$ if and only if it is deducible in $\mathrm{RK}_\Sigma(\mathrm{PRA})$. By theorem \ref{theorem A}, it is enough to show that the axioms of $\mathsf{PRA}$ given in Simpson's text \cite{Simpson}*{section IX.3} are provably equivalent to the axioms of $\mathrm{PRA}$ given in figures \ref{figure 8} and \ref{figure 9}, provided that we translate each atomic formula $s < t$ in the former theory as the atomic formula $ t \dotminus s \neq 0$ in the latter theory \cite{Simpson}*{definition IX.3.4}.

The axioms of $\mathrm{PRA}$ given in figure \ref{figure 8} and \ref{figure 9} are certainly all deducible in the first-order system $\mathsf{PRA}$. Conversely, referring to \cite{Simpson}*{definition IX.3.2}, the first axiom follows from (14), the second axiom follows from (12), and the third axiom follows from (9), (10), (13), and the axioms for equality (1)-(6). The fourth, fifth, sixth, and seventh axioms in Simpson's text are essentially (15), (16), (17), and (18), respectively. Finally, the eighth axiom, the primitive recursive induction scheme, follows more or less directly from (19).

At first glance, the argument appears complete, but there are easily overlooked loose ends. First, the formulation of $\mathsf{PRA}$ in Simpson's text treats equality as a logical symbol, while the rewriting system $\mathrm{RK}_\Sigma(\mathrm{PRA})$ does not; the axioms for equality are (1)-(4). Second, strictly speaking, the two systems have different vocabularies, so we are formally extending the system in Simpson's text by definitions for the binary predicate symbols $\neq$, $<$, and $\not <$. The definition of $\neq$ follows from (5) and (6), and the definition of $\not <$ will similarly follow from (7) and (8).

We have shown that every first-order order formula deducible in the system $\mathsf{PRA}$ in Simpson's text follows provably from the axioms in figure \ref{figure 9}. It remains to show that the first-order formula $x < y \Leftrightarrow y \dotminus x \neq 0$ follows from these axioms. We appeal to induction on $y$. For $y=0$, one direction of the equivalence follows by (9), and the other direction follows from the equation $0 \dotminus x =0 $, which is deducible in $\mathsf{PRA}$. At the induction step, if $x < y \Leftrightarrow y \dotminus x \neq 0$, then we deduce

$$ x< Sy \; \mathop{\Longleftrightarrow}^{(11)} \; x<y \Or x = y \; \Longleftrightarrow \; y \dotminus x \neq 0 \Or x = y \; \Longleftrightarrow \; S(y) \dotminus x \neq 0.$$

\end{proof}

\begin{example}
Our motivation includes the formalization of all foundational positions that make a distinction between complete and incomplete totalities, equally the finitistic and the transfinitistic. Having reformulated $\mathsf{\PRA}$ in $\mathrm{RK}_\Sigma$, we turn to its transfinite counterpart, Rathjen's theory $\mathrm{PRS}$ of primitive recursive set functions \cite{Rathjen}. For $\mathrm{PRS}$, the universe is the totality of all pure sets, and the vocabulary has a function symbol for each primitive recursive operation on this totality \cite{JensenKarp}. Recursion occurs over the well-founded membership relation $\in$.

If the totality of all pure sets is incomplete, then we should reason about pure sets without presuming to make claims about objects that do not all exist even in an ideal platonist sense, i. e., without quantifying universally over the whole totality. Thus, we are led to formalize our reasoning in $\mathrm{RK}_\Sigma$.

The axioms of Rathjen's theory \cite{Rathjen}*{section 6} are all $\Pi_2$, so the theory can be formalized in $\mathrm{RK}_\Sigma$. However, we provide a superficial reformulation of Rathjen's theory, which formalizes set existence principles in terms of function symbols (figure \ref{figure 10}). The intention is to present the theory in full but readable detail, and to emphasize the constructive intuition of the set existence principles. The nonoverscored function symbols in Rathjen's formulation are shorthand notations, not function symbols in the theory's vocabulary.\cite{RathjenEmail}.

While our reformulation of the \emph{axioms} of $\mathsf{PRS}$ is largely superficial, our replacement of its \emph{logic} is conceptually significant. In its original formulation, $\mathsf{PRS}$ is a first-order system analogous to the system $\mathsf{PRA}$ in Simpson's text \cite{Simpson}*{IX.3}. There is no established quantifier-free variant of $\mathsf{PRS}$, so in this instance, it is not merely a technicality that the deductions of $\PRS$ include formulas that quantify over the incomplete totality of all pure sets.

\end{example}

We now adapt Rathjen's conservation theorem for $\mathsf{PRS}$ \cite{Rathjen}*{theorem 1.4} to our system, just as we did with the extension of Parsons's theorem in corollary \ref{corollary PRA}.

\begin{corollary}
Let $\phi$ and $\psi$ be $\Sigma$ formulas in the language of set theory, with binary relation symbols $\neq$ and $\not \in$. The sentence $\overline {\phi \Yields \psi}$ is derivable in $\mathsf{KP^-}$ $+$ $\Sigma_1$-foundation $+$ $\Pi_1$-foundation if and only if there is a deduction from $\phi$ to $\psi$ in the rewriting system $\mathrm{RK_\Sigma}(\mathrm{PRS})$, whose class of nonlogical axioms $\mathrm{PRS}$ is given in figures \ref{figure 8} and \ref{figure 10}.
\end{corollary}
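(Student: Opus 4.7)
The plan follows the proof of Corollary~\ref{corollary PRA} step by step. First, I would invoke Rathjen's conservation theorem \cite{Rathjen}*{theorem 1.4}, which asserts that $\mathsf{KP^-} + \Sigma_1$-foundation $+ \Pi_1$-foundation and $\mathsf{PRS}$ prove the same $\Pi_2$ sentences in the shared vocabulary, once the former is definitionally extended by the function symbols for the primitive recursive set operations. Because $\overline{\phi \Yields \psi}$ is $\Pi_2$ whenever $\phi$ and $\psi$ are $\Sigma$ formulas, the problem reduces to showing that $\overline{\phi \Yields \psi}$ is derivable in first-order $\mathsf{PRS}$ if and only if there is a deduction from $\phi$ to $\psi$ in $\mathrm{RK}_\Sigma(\mathrm{PRS})$. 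Theorem~\ref{theorem A} delivers this equivalence, provided that the nonlogical axioms displayed in figures~\ref{figure 8} and~\ref{figure 10} are provably equivalent in first-order logic to Rathjen's original axiomatization.

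The remaining work, just as in Corollary~\ref{corollary PRA}, is to verify this equivalence of axiomatizations. For each set-existence axiom of Rathjen's formulation \cite{Rathjen}*{section 6}, I would exhibit a short derivation from the axioms of figure~\ref{figure 10}, which introduce an explicit function symbol for each primitive recursive set operation in place of Rathjen's shorthand notations \cite{RathjenEmail}. The converse direction is routine because every function symbol of figure~\ref{figure 10} directly names an operation that Rathjen's theory proves total and characterizes by its expected defining equations. Rathjen's $\in$-foundation scheme matches the corresponding axiom of figure~\ref{figure 10} by a straightforward translation.

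Two technicalities require care, precisely as in the arithmetic case. First, Rathjen's formulation treats $=$ as a logical symbol while $\mathrm{RK}_\Sigma(\mathrm{PRS})$ does not, so figure~\ref{figure 8} must supply the reflexivity, symmetry, transitivity, and congruence axioms for equality. Second, Rathjen's vocabulary contains neither $\neq$ nor $\not \in$, so we formally extend his theory by the definitions $s \neq t \Leftrightarrow \neg\, s = t$ and $s \not \in t \Leftrightarrow \neg\, s \in t$; figure~\ref{figure 8} then forces these symbols to denote the complementary relations. The main obstacle is simply the tedium of the equivalence check: figure~\ref{figure 10} contains substantially more axioms than figure~\ref{figure 9}, and each clause of the primitive recursive set recursion schema must be aligned with the corresponding function symbol of Rathjen's theory, but no new conceptual difficulty arises beyond what was handled for $\mathsf{PRA}$.
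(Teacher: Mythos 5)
Your proposal mirrors the paper's proof, which itself opens with ``we follow the proof of corollary \ref{corollary PRA}'': invoke Rathjen's conservation theorem \cite{Rathjen}*{theorem 1.4} to reduce to first-order $\mathsf{PRS}$, then show the two axiomatizations have logically equivalent extensions by definitions, handling equality and the negated relation symbols $\neq$, $\not\in$ exactly as in the arithmetic case. One step, however, does not go through as you state it. You appeal to Theorem \ref{theorem A}, but its hypotheses require the vocabulary $\S$ and the theory $T$ to be \emph{finite}, and $\mathrm{PRS}$ satisfies neither: its vocabulary contains a function symbol $S(\phi)$ for every $\Delta_0$ formula $\phi$ and is closed under the composition, image, and recursion constructions, so both $\S$ and the axiom set are infinite. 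The paper instead appeals to Corollary \ref{corollary B}, the model-theoretic form of the equivalence, which is stated for arbitrary $\S$ and $T$; this is also why the present corollary, unlike Corollary \ref{corollary PRA}, carries no $\ISigma$ tag. Your argument could be repaired by restricting a given derivation to the finitely many symbols and axioms it actually uses, but as written the citation is to a statement whose hypotheses fail.

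A second, smaller point: you dismiss the harder direction of the equivalence of axiomatizations --- defining the function symbols of figure \ref{figure 10} inside Rathjen's theory --- as ``routine'' because Rathjen's theory ``proves each operation total.'' The paper is more careful here: since its choice of initial functions differs from Rathjen's (and from Jensen--Karp's), it must justify that operations such as $I(\underline f)$ and $S(\phi)$ are primitive recursive at all, and it does so by citing the closure properties of primitive recursive set functions in \cite{JensenKarp}*{1.3} and noting that the proofs in \cite{JensenKarp}*{appendix I} both supply the required definitions and formalize in Rathjen's theory. Without some such appeal, the totality and characterization you assert is not available; you should name the source of those closure properties rather than assert them.
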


The system $\mathsf{KP^-}$ is defined in \cite{Rathjen}*{section 1}, as are the foundation axioms. The foundation axioms implicitly allow parameters, both in the present paper and in the reference. In the statement of this corollary, we identify each atomic formula $s \neq t$ with the formula $\neg s = t$, and each atomic formula $s \not \in t$ with the atomic formula $\neg s \in t$.

\begin{proof}
We follow the proof of corollary \ref{corollary PRA}. After appealing to Rathjen's conservation theorem for $\mathsf{PRS}$ \cite{Rathjen}*{theorem 1.4}, and to corollary \ref{corollary B}, it remains only to show that Rathjen's axioms for $\mathsf{PRS}$ \cite{Rathjen}*{section 6}, together with the usual axioms for equality, imply the same sentences of the language of set theory as our theory $\mathrm{PRS}$, given in figures \ref{figure 8} and \ref{figure 10}. To accomplish this, we argue that the two theories have logically equivalent extensions by definitions.

In full detail, the argument is tedious but straightforward. Formally, we extend each theory by definitions of the form $\overline F(x_1, \ldots, x_k) = \underline f(x_1, \ldots, x_k)$, where $\overline F$ is a function symbol of Rathjen's theory, and $\underline f$ is a function symbol of our theory. However, to enhance readability, we will render these definitions indirectly.

Extending our theory to incorporate Rathjen's theory is the easier of the two extensions. Our list below matches up with the list in \cite{Rathjen}*{section 6}:
\begin{enumerate}
\item $\overline P_{n,i} = \underline P^n_i$
\item $\overline Z = S(\Falsehood)$
\item $\overline M = C (\underline A, C(\underline A, P^1_1, P^1_1), P^1_1)$
\item $\overline{C} = C (S((x_3 \in x_4 \And x_0 = x_1) \And (x_3 \not \in x_4 \And x_0 = x_2)), C( \underline A, \underline P^4_1, P^4_2), \underline P^4_1, \underline P^4_2, \underline P^4_3, \underline P^4_4)$
\item $\overline F = C(\underline k, \underline g_1, \ldots, \underline g_n)$ \hfill if $\overline F$ is obtained by composing $\overline K$ with $\overline G_1, \ldots \overline G_n$
\item $\overline F = R( C (\underline h, C (\underline U, \underline P^{n+2}_1), \underline P_2^{n+2}, \ldots, \underline P^{n+2}_{n+2})$ \hfill if $\overline F$ is obtained from $\overline H$ by recursion
\end{enumerate}
It is clear that our theory establishes Rathjen's axioms for these defined function symbols.

The definitions for the other extension are similar but more complex. Perusing the closure properties of primitive recursive set functions \cite{JensenKarp}*{1.3}, we see that the functions of our theory are indeed all primitive recursive, and the proofs of these closure properties in \cite{JensenKarp}*{appendix I} provide definitions for the function symbols of our theory in terms of those of Rathjen's theory. Furthermore, the proofs of these closure properties can be readily formalized in Rathjen's theory.
\end{proof}

Finally, taking the $\Delta_0$-definable function $G$ in the Rathjen's \cite{Rathjen}*{theorem 1.4} to be the constant function $x \mapsto \omega$, as he suggests doing, we similarly obtain the following variant of the corollary:

\begin{corollary}
Let $\phi$ and $\psi$ be $\Sigma$ formulas in the language of set theory, with binary relation symbols $\neq$ and $\not \in$. The sentence $\overline{\phi \Yields \psi}$ is deducible in $\mathsf{KP^-}$ $+$ $\Sigma_1$-foundation $+$ $\Pi_1$-foundation $+$ Infinity if and only if there is a deduction from $\phi$ to $\psi$ in the rewriting system $\mathrm{RK}_\Sigma(T)$, whose class $T$ of nonlogical axioms is given in figures \ref{figure 8}, \ref{figure 10}, and \ref{figure 11}.
\end{corollary}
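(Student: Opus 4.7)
The plan is to adapt the proof of the preceding corollary almost verbatim, with one substantive change: in place of applying Rathjen's conservation theorem \cite{Rathjen}*{theorem 1.4} with a generic $\Delta_0$-definable $G$, I would take $G$ to be the constant function $x \mapsto \omega$, as Rathjen himself suggests. Under this instantiation, the ``strong'' side of the conservation becomes $\mathsf{KP^-} + \Sigma_1$-foundation $+\, \Pi_1$-foundation $+$ Infinity, since requiring the universe to be closed under $x \mapsto \omega$ is tantamount to asserting that $\omega$ exists; and the ``primitive recursive'' side becomes $\mathsf{PRS}$ augmented by a function symbol for $G$ together with its defining axioms. The claim of the corollary then amounts to saying that, up to definitional equivalence, this augmented theory is precisely what is axiomatized by the union of figures \ref{figure 8}, \ref{figure 10}, and \ref{figure 11}.

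The argument then proceeds in three steps, paralleling the proof of the previous corollary. First, invoke Rathjen's theorem in this specific instance, reducing the problem to showing that $\overline{\phi \Yields \psi}$ is first-order provable from the augmented primitive recursive theory if and only if there is a deduction from $\phi$ to $\psi$ in $\mathrm{RK}_\Sigma(T)$ for $T$ as in the statement. Second, apply \cref{corollary B} (equivalently \cref{theorem A}) to pass between first-order provability from $T$ and deducibility in $\mathrm{RK}_\Sigma(T)$. Third, verify that the axioms introduced in figure \ref{figure 11} are logically equivalent, after extension by definitions on both sides, to Rathjen's extension of $\mathsf{PRS}$ for this choice of $G$.

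The main obstacle is this third step. For each function symbol in figure \ref{figure 11} one must supply a primitive recursive definition in Rathjen's extended vocabulary and verify its defining equations; conversely, one must express Rathjen's constant for $\omega$ in terms of the function symbols of figure \ref{figure 11} and derive Rathjen's axioms from ours. The bookkeeping resembles that of the previous corollary, but with the content specific to $\omega$ and the finite ordinals now added. Fortunately, the only new content beyond the preceding corollary is localized to the single function symbol supplied by $G$ and its supporting axioms about $\omega$, so the closure-property arguments from \cite{JensenKarp}*{appendix I} already invoked in the previous proof adapt directly. Once the third step is complete, chaining the three reductions yields the claimed equivalence.
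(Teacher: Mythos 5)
Your proposal matches the paper's own argument: the paper derives this corollary in a single sentence by instantiating the function $G$ in Rathjen's theorem 1.4 as the constant function $x \mapsto \omega$ and then proceeding ``similarly'' to the preceding corollary, i.e.\ via \cref{theorem A} and a definitional-equivalence check against figure \ref{figure 11}. Your three-step decomposition and your identification of the figure-\ref{figure 11} bookkeeping as the only new content are exactly the intended reading, just spelled out in more detail than the paper bothers to.
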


I thank John Burgess, Lev Beklemishev, Joost Joosten, Michael Rathjen, and Stephen Simpson for their patient replies to my questions.

\vspace{-1mm}

\begin{bibdiv}
\begin{biblist}

\bib{Beklemishev}{article}{
author={L. Beklemishev},
title={A note on strictly positive logics and word rewriting systems},
date={2015},
eprint={arXiv:1509.00666}
}

\bib{Brunnler04}{book}{
author={K. Br\"unnler},
title={Deep Inference and Symmetry in Classical Proofs},
publisher={Logos Verlag Berlin},
year={2004}
}

\bib{Buss}{article}{
author={S. Buss},
title={Introduction to proof theory},
journal={Handbook of Proof Theory, Studies in Logic and The Foundations of Mathematics},
volume={137},
editor={S. Buss.}
%pages={1-78}
}

\bib{Burgess}{article}{
author={J. Burgess},
journal={Kurt G\"odel: Essay for His Centennial, Lecture Notes in Logic, Cambridge University Press},
title={On the outside looking in: a caution about conservativeness},
year={2010},
%pages={128-141}
publisher={Cambridge University Press}
}

\bib{Feferman09}{article}{
author={S. Feferman},
title={What's definite? What's not?},
eprint={math.stanford.edu/~feferman/papers/whatsdef.pdf},
year={2009}
}

\bib{Feferman09A}{article}{
author={S. Feferman},
title={Conceptions of the continuum},
journal={Intellectica},
number={51},
%pages={169-189},
date={2009}
}

\bib{Goodstein54}{article}{
author={R. Goodstein},
title={Logic-free formalisations of recursive arithmetic},
journal={Mathematica Scandinavica},
year={1954},
volume ={2},
%pages={247-261}
}

\bib{HamkinsLinnebo17}{article}{
author={J. Hamkins},
author={{\O}. Linnebo},
title={The modal logic of set-theoretic potentialism and the potentialist maximality principles},
eprint={arXiv:1708.01644},
date={2017}
}

\bib{Hamkins18}{article}{
author={J. Hamkins},
title={The modal logic of arithmetic potentialism and the universal algorithm},
eprint={arXiv:1801.04599},
date={2018}
}

\bib{JensenKarp}{article}{
author={R. Jensen},
author={C. Karp},
title={Primitive recursive set functions},
journal={Axiomatic set theory, Part 1, Proceedings of Symposia in Pure Mathematics},
volume={13},
number={1},
%pages={143-176}
}

\bib{Pozsgay72}{article}{
author={L. Pozsgay},
title={Semi-intuitionistic set theory},
journal={Notre Dame Journal of Formal Logic},
date={1972},
volume={13},
number={4},
%pages={546-550}
}

\bib{Rathjen}{article}{
author={M. Rathjen},
title={A proof-theoretic characterization of the primitive recursive set functions},
journal={Journal of Symbolic Logic},
volume={57},
number={3},
date={1992}
}

\bib{RathjenEmail}{article}{
author={M. Rathjen},
title={personal correspondence},
year={2016}
}

\bib{RauszerSabalski}{article}{
author={C. Rauszer},
author={B. Sabalski},
title={Notes on the Rasiowa-Sikorski Lemma},
date={1975},
volume={34},
number={3},
journal={Studia Logica}
}

\bib{Simpson}{book}{
author={S. Simpson},
title={Subsystems of Second Order Arithmetic},
publisher={Cambridge University Press},
date={2009}
}

\bib{Skolem67}{article}{
author={T. Skolem},
title={The foundations of elementary arithmetic established by means of the recursive mode of thought without the use of apparent variables ranging over infinite domains},
journal={From Frege to G\"odel, Harvard University Press},
date={1967},
publisher={Harvard University Press},
%pages={302-333}
}

\bib{Schutte77}{book}{
author={K. Sch\"utte},
title={Proof Theory},
publisher={Springer-Verlag},
year={1977}
}

\bib{Tait}{article}{
author={W. Tait},
title={Finitism},
journal={Journal of Philosophy},
date={1981},
volume={78},
number={9}
%pages={524-546}
}

\bib{Takahashi}{article}{
author={M. Takahashi},
title={A foundation of finite mathematics},
journal={Publications of the Research Institute for Mathematical Sciences, Kyoto University},
volume={12},
date={1977}
}

\bib{Weaver09}{article}{
author={N. Weaver},
title={The concept of a set},
eprint={arXiv:0905.1677},
date={2009}
}

\bib{Weaver15}{book}{
author={N. Weaver},
title={Truth and Assertibility},
publisher={World Scientific},
date={2015}
}

\end{biblist}
\end{bibdiv}

\newpage

\begin{figure}

\begin{center} Logical Rules \end{center}

$$
\infer{\Gamma, \Falsehood \Proves \Delta}{} \qquad \infer{ \Gamma \Proves \Delta, \Truth}{}
$$

$$
\infer{\Gamma, \phi \And \psi \Proves \Delta}{\Gamma, \phi \Proves \Delta}
\qquad
\infer{\Gamma,\psi \And \phi \Proves \Delta}{\Gamma, \phi \Proves \Delta}
\qquad
\infer{ \Gamma \Proves \Delta, \phi \And \psi}{\Gamma \Proves \Delta,\phi & \Gamma \Proves \Delta, \psi}
$$

$$
\infer{\Gamma, \phi \Or \psi \Proves \Delta}{\Gamma, \phi \Proves \Delta &\Gamma, \psi  \Proves \Delta}
\qquad
\infer{\Gamma \Proves \Delta,\phi \Or \psi}{\Gamma \Proves \Delta,\phi}
\qquad
\infer{\Gamma \Proves \Delta,  \psi \Or\phi}{\Gamma \Proves \Delta,\phi}
$$

$$
\infer{\Gamma, \phi \Yields \psi \Proves \Delta, \Delta'}{\Gamma \Proves \Delta, \phi & \Gamma, \psi \Proves \Delta'}
\qquad
\infer{\Gamma \Proves \Delta, \phi \Yields \psi}{\Gamma, \phi \Proves \Delta, \psi}
$$

$$
\infer{\Gamma, \exists v\: \phi \Proves \Delta}{\Gamma, \phi(v/w) \Proves \Delta}
\qquad
\infer{\Gamma \Proves \Delta, \exists v \: \phi}{\Gamma \Proves \Delta,  \phi(v/t)}
$$

$$
\infer{\Gamma, \forall v\: \phi  \Proves \Delta}{\Gamma,  \phi(v/t) \Proves \Delta}
\qquad
\infer{\Gamma \Proves \Delta, \forall v \: \phi}{\Gamma \Proves \Delta, \phi(v/w)}
$$

\vspace{7mm}

\begin{tabular}{ccc}
Axiom Rule  & \hspace{25mm} & Weakening Rules \\
&  & \\
\infer{\phi \Proves \phi}{} & & \infer{\Gamma, \phi \Proves \Delta}{ \Gamma \Proves \Delta}
\qquad
\infer{\Gamma \Proves \Delta, \phi}{\Gamma \Proves \Delta}
\end{tabular}

\vspace{7mm}

\begin{tabular}{ccc}
Multiset Rule  & \hspace{25mm} & Cut Rule \\
 &  &  \\
$\infer{\Gamma' \Proves \Delta'}{\Gamma \Proves \Delta}$ &   & $\infer{\Gamma \Proves \Delta, \Delta'}{\Gamma \Proves \Delta, \phi & \Gamma, \phi \Proves \Delta'}$ 
\end{tabular}

\caption{the system $\lL\kK$}
\label{figure 1}

\vspace{10mm}

{\justifying \noindent For the left existential quantifier rule and the right universal quantifier rule, the variable $w$ must not be free in $\Gamma$, $\Delta$, or $\phi$. For the multiset rule, the sequent $\Gamma'$ must consists of the same formulas as the sequent $\Gamma$, and the sequent $\Delta'$ must consist of the same formulas as $\Delta$.\par}

\end{figure}

\newpage

\begin{figure}

\begin{center} Logical Rules \end{center}

$$
\infer{\Gamma, \Falsehood \Proves \Delta}{} \qquad \infer{ \Gamma \Proves \Delta, \Truth}{}
$$

$$
\infer{\Gamma, \phi \And \psi \Proves \Delta}{\Gamma, \phi \Proves \Delta}
\qquad
\infer{\Gamma,\psi \And \phi \Proves \Delta}{\Gamma, \phi \Proves \Delta}
\qquad
\infer{ \Gamma \Proves \Delta, \phi \And \psi}{\Gamma \Proves \Delta,\phi & \Gamma \Proves \Delta, \psi}
$$

$$
\infer{\Gamma, \phi \Or \psi \Proves \Delta}{\Gamma, \phi \Proves \Delta &\Gamma, \psi  \Proves \Delta}
\qquad
\infer{\Gamma \Proves \Delta,\phi \Or \psi}{\Gamma \Proves \Delta,\phi}
\qquad
\infer{\Gamma \Proves \Delta,  \psi \Or\phi}{\Gamma \Proves \Delta,\phi}
$$

$$
\infer{\Gamma, \exists v\: \phi \Proves \Delta}{\Gamma, \phi(v/w) \Proves \Delta}
\qquad
\infer{\Gamma \Proves \Delta, \exists v \: \phi}{\Gamma \Proves \Delta,  \phi(v/t)}
$$

$$
\infer{\Gamma, \forall v \in s\: \phi  \Proves \Delta}{\Gamma,  t \not \in s \Or \phi(v/t) \Proves \Delta}
\qquad
\infer{\Gamma \Proves \Delta,  \forall v \in s \: \phi}{\Gamma \Proves \Delta, w \not \in s \Or \phi(v/w)}
$$

\vspace{7mm}

\begin{tabular}{ccc}
Axiom Rule  & \hspace{30mm} & Weakening Rules \\
&  & \\
\infer{\phi \Proves \phi}{} & & \infer{\Gamma, \phi \Proves \Delta}{ \Gamma \Proves \Delta}
\qquad
\infer{\Gamma \Proves \Delta, \phi}{\Gamma \Proves \Delta}
\end{tabular}

\vspace{7mm}

\hspace{15mm}\begin{tabular}{ccc}
Multiset Rule  & \hspace{10mm} & $T$-Cut Rule \\
 &  &  \\
$\infer{\Gamma' \Proves \Delta'}{\Gamma \Proves \Delta}$ &   & $\infer{\Gamma \Proves \Delta, \Delta'}{\Gamma \Proves \Delta, \phi(t_1, \ldots, t_n) & \Gamma, \psi(t_1, \ldots, t_n) \Proves \Delta'}$ 
\end{tabular}

\caption{the system $\lL\kK_\Sigma(T)$}
\label{figure 2}

\vspace{10mm}

{\justifying \noindent For the left existential quantifier rule, the variable $w$ must not be free in $\Gamma$, $\Delta$, or $\phi$. For the right universal quantifier rule, the variable $w$ must not be free in $\Gamma$, $\Delta$, $\phi$, or $s$. For the multiset rule, the sequent $\Gamma'$ must consists of the same formulas as the sequent $\Gamma$, and the sequent $\Delta'$ must consist of the same formulas as $\Delta$. For the $T$-cut rule, the implication $\phi \Yields \psi$ must be an axiom of $T$.
\par}

\end{figure}

\newpage

\begin{figure}

\vspace{20mm}

\begin{enumerate}
\item[(0)] $\phi(t_1, \ldots, t_n) \YIELDS \psi(t_1, \ldots, t_n)$ \hfill for $\phi \Yields \psi$ an axiom of $T$
\item $\phi \YIELDS \Truth$
\item $\Falsehood \YIELDS \phi$
\item $\phi \And \psi \YIELDS \phi$
\item $\psi \And \phi \YIELDS \phi$
\item $\phi \YIELDS \phi \And \phi$
%\item $(\chi \Or \phi) \And (\chi \Or \psi) \YIELDS \chi \Or (\phi \And \psi)$
\item $\phi \YIELDS \phi \Or \psi$
\item $\phi \YIELDS \psi \Or \phi$
\item $\phi \Or \phi \YIELDS \phi$
\item $\chi \And (\phi \Or \psi) \YIELDS (\chi \And \phi) \Or (\chi \And \psi)$
\item $\phi(v/t) \YIELDS \exists v \: \phi $
\item $\exists v\: \phi \YIELDS \phi$ \hfill for $v$ not free in $\phi$
\item $\chi \And \exists v\: \phi \YIELDS \exists v\: (\chi \And \phi)$ \hfill for $v$ not free in $\chi$
\item $ \forall v \in s \: \phi \YIELDS t \not \in s \Or \phi(v/t)$
\item $ \phi \YIELDS \forall v \in s\: \phi$ \hfill for $v$ not free in $\phi$
\item[(14a)] $\forall v \in s \: (v \not \in s \Or \phi) \YIELDS \forall v \in s\: \phi$
\item $\forall v \in s \: (\chi \Or \phi) \YIELDS \chi \Or (\forall v \in s \: \phi)$ \hfill for $v$ not free in $\chi$
\end{enumerate}

\caption{rules the rewritting system $\mathrm{RK}_\Sigma(T)$}
\label{figure 3}

\vspace{10mm}

{\justifying \noindent The symbols $s, t, t_1, \ldots, t_n$ denote terms. Implications of the form (1)-(15) are the logical axioms of $\mathrm{RK}_\Sigma(T)$. The class of logical axioms is closed under legal substitution. Implications of the form (0) are substitution instances of the nonlogical axioms of $\mathrm{RK}_\Sigma(T)$.
For each implication $\sigma \YIELDS \sigma'$ of the form (0)-(15), the system $\mathrm{RK}_\Sigma(T)$ has the inference rule 
$$\infer{\Xi(P/\sigma')}{\Xi(P/\sigma)},$$
where $\Xi$ is any $\Sigma(\S)$ formula with a single schematic variable $P$ in place of a subformula.\par}
\end{figure}

\vspace{15mm}

\newpage \quad
\newpage

\newpage
\newpage

\newpage
\quad
\newpage
\quad \newpage\

\begin{figure}
$$\infer{\chi \And(\phi \Or \psi) \Proves (\chi \And \phi) \Or (\chi \And \psi)}{\infer{\chi, \phi \Or \psi \Proves (\chi \And \phi) \Or (\chi \And \psi)}
{\infer{\chi, \phi \Proves (\chi \And \phi) \Or (\chi \And \psi)}{\infer{\chi, \phi \Proves \chi \And \phi}{\infer{\chi, \phi \Proves \chi}{\chi \Proves \chi} & \infer{\chi, \phi \Proves \phi}{\phi \Proves \phi}}} & \infer{\chi, \psi \Proves (\chi \And \phi) \Or (\chi \And \psi)}{\infer{\chi, \psi \Proves \chi \And \psi}{\infer{\chi, \psi \Proves \chi}{\chi \Proves \chi} & \infer{\chi, \psi \Proves \psi}{\psi \Proves \psi}}}}}
$$
\caption{rule (9) derived in $\lL\iI$}\label{figure 4}
\end{figure}

\begin{figure}
$$
\infer{\chi \And \exists v\: \phi \Proves \exists v\: \chi \And \phi}{
\infer{\chi, \exists v \: \phi \Proves \exists v\: \chi \And \phi}{
\infer{\chi, \phi \Proves \exists v\: \chi \And \phi}{
\infer{\chi, \phi \Proves \chi \And \phi}{
\infer{\chi, \phi \Proves \chi}{\chi \Proves \chi} & \infer{\chi, \phi \Proves \phi}{\phi \Proves \phi}
}}}}
$$

\caption{rule (12) derived in $\lL\iI$}

\end{figure}

\begin{figure}
$$\infer{\overline T, \phi \Proves \forall v \in s\: \phi}{\infer{\phi \Proves \forall v \in s\: \phi}{\infer{\phi \Proves v \not \in s \Or \phi}{\phi \Proves \phi}  }  }$$
\caption{rule (14) derived in $\lL\iI$}
\end{figure}

\begin{figure}
$$\infer{ \forall v \in s\: (v \not \in s \Or \phi) \proves \forall v \in s\: \phi}{\infer{\forall v \in s\: (v \not \in s \Or \phi) \Proves v \not \in s \Or \phi}{\infer{v \not \in s \Or(v \not \in s \Or \phi) \Proves v \not \in s \Or \phi}{\infer {v \not \in s \Proves v \not \in s \Or \phi}{v \not \in s \Proves v \not \in s} & v \not \in s \Or \phi \Proves v \not \in s \Or \phi}    }}$$
\caption{rule (14a) derived in $\lL\iI$}
\end{figure}

\begin{figure}
\begin{enumerate}
\item $\Truth \YIELDS x = x$
\item $x = y \YIELDS y = x$
\item $x = y \And y = z  \YIELDS x = z$
\item $\phi(x, z_1, \ldots z_k) \And x= y \Yields \phi(y, z_1, \ldots, x_k)$ \hfill for $\phi$ any $\Sigma(\S)$ formula
\item $\Truth \YIELDS x =y \Or x \neq y$
\item $x = y \And x \neq y \YIELDS \Falsehood$
\end{enumerate}

\caption{the axioms for equality and inequality for vocabulary $\S$}\label{figure 8}

\vspace{0.5in}

\end{figure}

\begin{figure}
\begin{enumerate}\setcounter{enumi}{6}
\item $\Truth \YIELDS x < y \Or x \not < y$
\item $x< y \And  x \not < y \YIELDS \Falsehood$
\item $ x < \underline 0 \YIELDS \Falsehood$
\item $ \Truth \YIELDS x< \underline S(x)$
\item $ x < \underline S(y) \YIELDS x< y \Or x = y$
\item $\underline S(x) = \underline S(y) \YIELDS x =y$
\item $x \neq 0 \YIELDS \exists y\: x = \underline S(y)$
\item $\Truth \YIELDS \underline Z(x) = 0$
\item $\Truth \YIELDS \underline P^k_i(x_1, \ldots, x_k) = x_i$
\item $\Truth \YIELDS C(\underline g, \underline h_1, \ldots, \underline h_m)(x_1, \ldots,x_k) = \underline g( \underline h_1(x_1, \ldots, x_k), \ldots , \underline h_m(x_1, \ldots, x_k))$
\item $\Truth \YIELDS R(\underline g, \underline h) (0, x_1, \ldots, x_k) = g(x_1, \ldots, x_k)$
\item $\Truth \YIELDS R(\underline g , \underline h)(\underline S(y), x_1, \ldots, x_k) = h(y, R(\underline g, \underline h)(y, x_1, \ldots, x_k), x_1, \ldots, x_k)$
\item $\forall x < y\: \theta(x, z_1, \ldots, z_k) \Implies \theta(\underline S(x), z_1, \ldots, z_k)) \YIELDS \theta(0,z_1, \ldots, z_k)) \Implies \theta(y, z_1, \ldots, z_k))$
\end{enumerate}

\caption{the axioms of $\mathrm{PRA}$}\label{figure 9}

\vspace{10mm}

{\justifying \noindent

The $\Sigma(\S)$ theory $\mathsf{PRA}$ consists of the axioms in figure \ref{figure 8} and the axioms in figure \ref{figure 9}. The vocabulary $\S$ is the vocabulary of $\mathsf{PRA}$ given in Simpson's \textit{Subsystems of Second Order Arithmetic} \cite{Simpson}*{section IX.3}, together with binary relation symbols $\neq$, $<$, and $\not <$. In axiom scheme (22),
the formula $\theta(x, z_1, \ldots, z_k)$ is any $\Delta_0$ formula in the vocabulary $\S$. A formula is $\Delta_0$ if both its existential quantifiers and its universal quantifiers are bounded. If $\phi$ and $\psi$ are $\Delta_0$ formulas, then the material implication $\phi \Implies \psi$ is an abbreviation for the formula $\tilde \phi \Or \psi$, where $\tilde \phi$ is the $\Delta_0$ formula obtained from $\phi$ by replacing each quantifier, connective, and relation symbol with its dual: $\Falsehood \,\leftrightsquigarrow\, \Truth$, $\Or \,\leftrightsquigarrow\, \And$, $\exists \,\leftrightsquigarrow\, \forall$, $= \,\leftrightsquigarrow\, \neq$, and $< \,\leftrightsquigarrow\, \not <$.

\par

}

\end{figure}

\begin{figure}
\begin{enumerate}\setcounter{enumi}{6}
\item $\Truth \YIELDS x \in y \Or x \not \in y$
\item $x \in y \And x \not \in y \YIELDS \Falsehood$
\item $(\forall z \in x\: z \in y) \And (\forall z \in y\: z \in x) \YIELDS x =y$
\item $z \in y \And y \in x \YIELDS z \in \underline U(x)$
\item $z \in \underline U(x) \YIELDS \exists y \in x\: z \in y$
\item $\Truth \YIELDS x \in \underline A (x,y)$
\item $\Truth \YIELDS y \in \underline A(x,y)$
\item $z \in \underline A (x,y) \YIELDS z = x \Or z = y$
\item $\Truth \YIELDS \underline P^k_i(x_1, \ldots, x_k) = x_i$
\item $\Truth \YIELDS C(\underline g, \underline h_1, \ldots, \underline h_m)(x_1, \ldots,x_k) = \underline g( \underline h_1(x_1, \ldots, x_k), \ldots , \underline h_m(x_1, \ldots, x_k))$
\item $w \in y \YIELDS \underline f(w, x_1, \ldots, x_k) \in I(\underline f)(y, x_1, \ldots, x_k)$
\item $ z \in I(\underline f)(y, x_1, \ldots, x_k) \YIELDS \exists w \in y\: \underline f(w, x_1, \ldots, x_k)$
\item $\Truth \YIELDS R(\underline f)(y, x_1, \ldots, x_k) = \underline f(I(R(\underline f))(y, x_1, \ldots, x_k), y, , x_1, \ldots, x_k)$
\item $z \in y \And \phi(z, x_1, \ldots, x_k) \YIELDS z \in S(\phi)(y, x_1, \ldots, x_k) $
\item $z \in S(\phi)(y, x_1, \ldots, x_k) \YIELDS z \in y$
\item $z \in S(\phi)(y, x_1, \ldots, x_k) \YIELDS \phi(z, x_1, \ldots, x_k)$
\item $\exists z \in x\: \Truth \YIELDS \exists z \in x\: \forall y \in x\: z \not \in y$

\end{enumerate}

\caption{the axioms of $\mathrm{PRS}$}\label{figure 10}

\vspace{6mm}

{\justifying \noindent

The $\Sigma(\S)$ theory $\mathsf{PRS}$ consists of the axioms in figure \ref{figure 8} and the axioms in figure \ref{figure 10}. It is a variant of the theory $\mathsf{PRS}$ introduces in Rathjen's \textit{A proof-theoretic characterization of the primitive recursive set functions} \cite{Rathjen}*{section 6}. The vocabulary $\S$ has binary relation symbols $=$, $\neq$, $\in$, and $\not \in$; it has a unary function symbol $\underline U$, a binary function symbol $\underline A$, and function symbols $\underline P_k^i$ for positive integers $k$ and $1 \leq i \leq k$; it has a $k$-ary function symbol $C(\underline g, \underline h_1, \ldots, \underline h_m)$ for each $m$-ary function symbol $g$ and $k$-ary function symbols $\underline h_1, \ldots, \underline h_n$; it has a $(k+1)$-ary function symbol $I(\underline f)$ for each $(k+1)$-ary function symbol $\underline f$; it has a $(k+1)$-ary function symbol $R(\underline f)$ for each $(k+2)$-ary function symbol $\underline f$; finally, it has a $(k+1)$-ary function symbol $S(\phi)$ for each $\Delta_0$ formula $\phi(x_0, \ldots, x_k)$.  In this context, a formula $\phi$ is implicitly equipped with a positive integer arity, specifying its free variables $x_0, x_1, \ldots, x_k$, some of which may fail to appear in the formula. A formula is $\Delta_0$ if both its existential quantifiers and its universal quantifiers are bounded.
\par
}

\vspace{14mm}

\end{figure}

\vspace{7mm}

\begin{figure}

\begin{enumerate}\setcounter{enumi}{23}
\item $\Truth \YIELDS \underline \exists x \in \underline N(w)\: \Truth$
\item $y \in x \And x \in \underline N(w) \YIELDS y \in \underline N(w)$
\item $z \in y \And y \in x \And x \in \underline N(w) \YIELDS z \in x$
\item $x \in \underline N(w) \YIELDS \exists y\in x\: \forall z \in x\: z \in y \Or z = y$
\end{enumerate}

\caption{the axioms of infinity}\label{figure 11}
\vspace{6mm}

{\justifying \noindent
The vocabulary $\S$ is extended by the unary function symbol $\underline N$. The vocabulary $\S$ is then closed under the constructions detailed in the caption to figure \ref{figure 10}. \par}

\end{figure}

\end{document}